\documentclass[12pt]{article}
\usepackage{amsmath, amsthm, amssymb,amscd}

\sloppy

\title{Complexity of matrix problems\footnotetext{This is the authors' version of a work that was published in Linear Algebra Appl. 361 (2003) 203--222.}}
\author{Genrich R. Belitskii%
\thanks{Partially supported by
Israel Science Foundation, Grant
216/98.}\\ Department of Mathematics\\
Ben-Gurion University of the Negev\\
Beer-Sheva 84105, Israel\\
 genrich@indigo.cs.bgu.ac.il
\and
Vladimir V. Sergeichuk%
\thanks{The research was done while this
author was visiting the Ben-Gurion
University of the Negev supported by
Israel Science Foundation.}\\ Institute
of Mathematics\\ Tereshchenkivska 3,
Kiev, Ukraine\\sergeich@imath.kiev.ua}
\date{}

\begin{document}
\maketitle

\begin{abstract}
In representation theory, the
problem of classifying pairs of
matrices up to simultaneous
similarity is used as a measure
of complexity; classification
problems containing it are
called wild problems. We show in
an explicit form that this
problem contains all
classification matrix problems
given by quivers or posets. Then
we prove that it does not
contain (but is contained in)
the problem of classifying
three-valent tensors. Hence, all
wild classification problems
given by quivers or posets have
the same complexity; moreover, a
solution of any one of these
problems implies a solution of
each of the others. The problem
of classifying three-valent
tensors is more complicated.

{\it AMS classification:} 15A21; 15A69;
16G20; 16G60.

{\it Keywords:} Canonical matrices;
Classification; Representations of
quivers and posets; Tensors; Tame and
wild matrix problems.
 \end{abstract}

\newcommand{\rank}{\mathop{\rm rank}\nolimits}

\newtheorem{theorem}{Theorem}[section]
\newtheorem{lemma}{Lemma}[section]

\def\newpic#1{%
 \def\emline##1##2##3##4##5##6{%
 \put(##1,##2){\special{em:point #1##3}}%
  \put(##4,##5){\special{em:point #1##6}}%
 \special{em:line #1##3,#1##6}}}
\newpic{}

\newcommand{\quiv}{%
\!\!
\unitlength 0.5mm \linethickness{0.4pt}
\begin{picture}(9,0)(7,12.8)
\put(15.00,15.00){\circle*{0.7}}
\put(9.47,15.00){\oval(4.00,4.00)[l]}
\bezier{16}(9.53,16.93)(11.33,17.00)(13.73,15.93)
\put(13.80,14.33){\vector(2,1){0.2}}
\bezier{20}(9.53,13.00)(11.80,13.13)(13.80,14.33)
\end{picture}
\! }

\newcommand{\pair}{%
\!
\unitlength 0.5mm \linethickness{0.4pt}
\begin{picture}(15,0)(-7,-2.2)
\put(0.00,0.00){\circle*{0.8}}
\put(-5.53,0.00){\oval(4.00,4.00)[l]}
\bezier{16}(-5.47,1.93)(-3.67,2.00)(-1.27,0.93)
\put(-1.20,-0.67){\vector(2,1){0.2}}
\bezier{20}(-5.47,-2.00)(-3.20,-1.87)(-1.20,-0.67)
\put(5.53,0.00){\oval(4.00,4.00)[r]}
\bezier{16}(5.47,1.93)(3.67,2.00)(1.27,0.93)
\put(1.20,-0.67){\vector(-2,1){0.2}}
\bezier{20}(5.47,-2.00)(3.20,-1.87)(1.20,-0.67)
\end{picture}
\!\! }

\section{Introduction}   \label{s1}

Classification problems of
representation theory split into two
types: {\it tame} (or classifiable) and
{\it wild} (containing the problem of
classifying pairs of matrices up to
simultaneous similarity); wild problems
are hopeless in a certain sense. These
terms were introduced by Donovan and
Freislich \cite{don} in analogy with
the partition of animals into tame and
wild ones.

Gelfand and Ponomarev \cite{gel-pon}
proved that the problem of classifying
pairs of matrices up to simultaneous
similarity contains the problem of
classifying $t$-tuples of matrices up
to simultaneous similarity for an
arbitrary $t$. (The problem of
classifying pairs of linear operators
is as complicated as the problem of
classifying 1,000,000-tuples of linear
operators!) This implies that it
contains the problem of classifying
representations of an arbitrary
$t$-dimensional algebra\footnote{This
algebra is a factor algebra $\Lambda=
k\langle x_1,\dots,x_t\rangle/J$ of the
free algebra of noncommutative
polynomials in $x_1,\dots,x_t$. Let
$g_1,\dots,g_r$ be generators of $J$,
then each matrix representation of
$\Lambda$ is a $t$-tuple of $n\times n$
matrices $(A_1,\dots,A_t)$ satisfying
$g_i(A_1,\dots,A_t)=0,\ i=1,\dots,r$;
it determines up to simultaneous
similarity.}, whence it contains matrix
problems given by quivers.

In Section \ref{s2}, we give the proof
of the last statement by methods of
linear algebra; it was sketched in
\cite[Sect. 3.1]{ser}. The notions of a
quiver and its representations were
introduced by Gabriel \cite{gab} and
admit to formulate problems of
classifying systems of linear mappings
(without relations).

In Section \ref{s3}, we prove that the
problem of classifying pairs of
matrices up to simultaneous similarity
contains matrix problems given by
partially ordered sets. The notion of
poset representations was introduced by
Nazarova and Roiter \cite{naz_roi} and
admits to formulate problems of
classifying block matrices
$[A_1\,|\,A_2\,|\dots|\,A_t]$ up to
elementary row-transformations of the
whole matrix, elementary
column-transformations within each
vertical strip, and additions of a
column of $A_i$ to a column of $A_j$
for a certain set of pairs $(i,j)$.

In Section \ref{s4}, we prove that the
problem of classifying three-valent
tensors contains the problem of
classifying pairs of matrices up to
simultaneous similarity, but it is not
contained in the last problem.
Three-valent tensors are given by
spatial matrices, so we first consider
the problem of classifying $m\times
n\times q$ {\it spatial matrices}
${\mathbb A}
=[a_{ijk}]_{i=1}^m{}_{j=1}^n{}_{k=1}^q$
up to {\it equivalence
transformations}:
\begin{equation}\label{1.00}
[a_{ijk}]\longmapsto [a'_{ijk}],\quad
a'_{i'j'k'}=\sum_{ijk}
a_{ijk}r_{ii'}s_{jj'} t_{kk'},
\end{equation}
where
\begin{equation}\label{1.01}
 R=[r_{ii'}],\ S=[s_{jj'}],\
T=[t_{kk'}]
\end{equation}
are nonsingular $m\times m$, $n\times
n$, and $q\times q$ matrices. We
classify $m\times n\times 2$ spatial
matrices up to equivalence and prove
that the problem of classifying
$m\times n\times 3$ spatial matrices up
to equivalence contains (but is not
contained in) the problem of
classifying pairs of matrices up to
simultaneous similarity.
\smallskip

Every matrix problem $\cal A$ is given
by a set ${\cal A}_1$ of $a$-tuples of
matrices and a set ${\cal A}_2$ of
admissible transformations with them.
We say that a matrix problem ${\cal A}$
{\it is contained in} a matrix problem
${\cal B}$ if there exists a $b$-tuple
${\cal T}(x)={\cal T}(x_1,\dots,x_a)$
of matrices, whose entries are
noncommutative polynomials in
$x_1,\dots,x_a$, such that
\begin{itemize}
  \item[(i)]
${\cal T}(A)={\cal T}(A_1,\dots,A_a)
\in{\cal B}_1$ if $A=(A_1,\dots,A_a)\in
{\cal A}_1$,
  \item[(ii)]
for every $A,A'\in{\cal A}_1$, $A$
reduces to $A'$ by transformations
${\cal A}_2$ if and only if ${\cal
T}(A)$ reduces to ${\cal T}(A')$ by
transformations ${\cal B}_2$.
\end{itemize}

In this article (except for Theorem
\ref{t4.3}), the entries of matrices
from ${\cal T}(x)$ are 0, scalars, or
$x_i$, and we replace them by zero
matrices, scalar matrices, or $A_i$.
Suppose ${\cal A}$ is contained in
${\cal B}$ and a set of {\it canonical
$b$-tuples} for the problem ${\cal B}$
is known (this set must posses the
following property: each $b$-tuple
$A\in{\cal B}_1$ reduces to a canonical
$A_{\text{can}}\in{\cal B}_1$, and $A$
reduces to $B$ iff
$A_{\text{can}}=B_{\text{can}}$). We
reduce to the form ${\cal T}(A)$ those
canonical $b$-tuples, for which this is
possible. Then all $a$-tuples $A$ from
the obtained set of ${\cal T}(A)$ may
be considered as canonical $a$-tuples
for ${\cal A}$. Hence, a solution of
the problem ${\cal B}$ implies a
solution of ${\cal A}$.

In \cite{ser}, the entries of matrices
in the considered matrix problems
satisfied systems of linear equations,
for this reason the entries of matrices
from ${\cal T}(x)$ were linear
polynomials. In the theory of
representations of quivers with
relations, the entries of matrices from
${\cal T}(x)$ are noncommutative
polynomials.
\smallskip

A quiver or poset is called {\it tame}
({\it wild}) if the problem of
classifying its representations is tame
(wild). We sum up results of this
article in the following theorem:

\begin{theorem} \label{t1.2}
All problems of classifying
representations of wild quivers or
posets have the same complexity: any of
them contains every other (moreover, a
solution of one implies solutions of
the others). The problem of classifying
three-valent tensors is more
complicated since it contains each of
them but is not contained in them.
\end{theorem}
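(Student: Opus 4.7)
The plan is to derive Theorem~\ref{t1.2} as a formal consequence of the results established in Sections~\ref{s2}, \ref{s3}, and \ref{s4}, together with elementary transitivity of the ``contains'' relation. First I would verify that the relation ``${\cal A}$ is contained in ${\cal B}$'' is transitive: given tuples of noncommutative polynomials ${\cal T}$ witnessing the containment of ${\cal A}$ in ${\cal B}$ and ${\cal U}$ witnessing the containment of ${\cal B}$ in ${\cal C}$, the substitution ${\cal U}({\cal T}(x))$ is again a tuple of noncommutative polynomials whose specialization satisfies conditions (i) and (ii) of the definition. Transitivity is therefore immediate from the algebra of polynomial substitution.

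Next I would treat the first assertion. By the definition of ``wild,'' every wild quiver or poset $Q$ admits a containment from the pair-similarity problem; conversely, the results of Sections~\ref{s2} and~\ref{s3} show that the pair-similarity problem contains the classification problem of every quiver and every poset. Given two wild quivers or posets $Q_1$ and $Q_2$, transitivity yields a chain: the problem for $Q_1$ contains pair-similarity, which in turn contains the problem for $Q_2$. Hence all wild quiver/poset problems contain each other. The supplementary ``solution implies solution'' statement is exactly the mechanism explained in the introduction after the definition of ``contained in'': a complete list of canonical forms for ${\cal B}$ yields one for ${\cal A}$ by retaining those canonical $b$-tuples that can be reduced to the form ${\cal T}(A)$.

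For the three-valent tensor statement, Section~\ref{s4} supplies both the positive containment (the three-valent tensor problem contains the pair-similarity problem) and the negative statement (it is not contained in pair-similarity). By the transitivity established above, containing pair-similarity is equivalent, in view of the first part of the theorem, to containing any fixed wild quiver or poset problem; hence the three-valent tensor problem contains each such problem. The strict inequality follows by contraposition: if the three-valent tensor problem were contained in some wild quiver or poset problem $Q$, then since $Q$ is itself contained in pair-similarity, transitivity would force the three-valent tensor problem to be contained in pair-similarity, contradicting Section~\ref{s4}.

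The main obstacle is therefore not in the present assembly argument, which is essentially bookkeeping, but in the negative result of Section~\ref{s4}: the proof that no $b$-tuple of noncommutative polynomials in two matrix variables can simulate the equivalence of $m\times n\times 3$ spatial matrices. Once that obstruction is available, the four-way schema above (definition of wild, Sections~\ref{s2}--\ref{s3}, Section~\ref{s4} containment, Section~\ref{s4} non-containment) closes Theorem~\ref{t1.2}.
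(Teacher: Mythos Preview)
Your proposal is correct and matches the paper's approach exactly: Theorem~\ref{t1.2} is presented in the paper as a summary (``We sum up results of this article in the following theorem''), with no separate proof beyond the assembly of Theorems~\ref{t2.1}, \ref{t3.1}, \ref{t4.2}, \ref{t4.2a}, and \ref{t4.3} via transitivity of the containment relation---precisely the bookkeeping you describe. One small correction: the negative result in Section~\ref{s4} (Theorem~\ref{t4.3}) is proved for $m\times n\times 2$ spatial matrices, not $m\times n\times 3$; the tensor case then follows because tensors contain all spatial matrix problems (Theorem~\ref{t4.2a}).
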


This theorem explains the existence of
the ``universal'' algorithm \cite{bel}
(see also \cite{bel1} or \cite{ser})
for reducing the matrices of an
arbitrary representation of a quiver or
poset to canonical form, and unsuccess
of the authors' attempts to extend to
three-valent tensors both this
algorithm and the method \cite{ser2}
for reducing the problem of classifying
systems of forms and linear mappings to
the problem of classifying linear
mappings (its analog would be a method
for reducing the classification of
systems of three-valent tensors to the
classification of spatial matrices up
to equivalence transformations).
\smallskip

The algorithm \cite{bel} was used in
\cite{ser_gal} in order to receive a
canonical form of $4\times 4$ matrices
up to simultaneous similarity. The
algorithm was also used in \cite{ser}
to prove that the set of canonical $m
\times n$ matrices for a {\it tame}
matrix problem forms a finite number of
points and straight lines in the affine
space of $m \times n$ matrices. This
statement is a strengthened form of
Drozd's Tame--Wild Theorem \cite{dro}
and holds for a large class of matrix
problems, which includes
representations of quivers and posets.
A full system of invariants for pairs
of matrices up to simultaneous
similarity was obtained by Friedland
\cite{fri}.

For each matrix problem, one has an
alternative: to solve it or to prove
that it is wild and hence is hopeless
in a certain sense. Examples of wild
problems:

(a) The problem of classifying pairs of
$m\times n$ and $n\times n$ matrices up
to transformations
$$
(A,B)\longmapsto (R^{-1}AR,SBR),
$$
where $R$ and $S$ are nonsingular
matrices (that is, the replacement of
the quiver \pair \, with
\quiv$\!\!\!\!\rightarrow$ \, does not
simplify the problem of classifying its
representations; see the list
\eqref{2.4a}).

(b) The problem of classifying pairs of
commuting nilpotent matrices $(A,B)$ up
to simultaneous similarity, see
\cite{gel-pon}; this problem was solved
in \cite{naz_bon} if $AB=BA=0$.

(c) The problem of classifying
quintuples of subspaces in a vector
space. A classification of quadruples
of subspaces (they may be given by
representations of the quiver
${}^{\searrow}_{\nearrow}\!\!\cdot\!\!
{}^{\swarrow}_{\nwarrow}$) was given in
\cite{gel-pon1}.

(d) The problem of classifying triples
of quadratic forms; its wildness
follows from the method of classifying
pairs of quadratic forms used in
\cite[Theorem 4]{ser2}. A
classification of all {\it tame}
systems of linear mappings, bilinear
forms, and quadratic forms (without
relations) was obtained in \cite[Sect.
4]{ser3}.

(e) The problem of classifying of
metric (or selfadjoint) operators in a
space with symmetric bilinear form; the
problem was solved by many authors if
this form is nonsingular, see
\cite[Theorems 5 and 6]{ser2}.

(f) The problem of classifying normal
operators in a space with indefinite
scalar product, see \cite{goh} or
\cite[Theorem 5.5]{ser3}.

In the theory of unitary matrix
problems, the role of pairs of matrices
up to simultaneous similarity is played
by the problem of classifying matrices
up to unitary similarity; it contains
the problem of classifying unitary
representations of an arbitrary quiver
(its points and arrows correspond to
unitary spaces and linear operators),
see \cite[Sect. 2.3]{ser1}.

The partition into tame and wild
problems was first exhibited for
representations of Abelian groups (see
\cite{gus}): Bashev \cite{bash} and
Heller and Reiner \cite{hel_rei}
classified all representations of the
Klein group (i.e., pairs of commuting
matrices $(A,B)$ satisfying $A^2=B^2=0$
up to simultaneous similarity) over an
algebraically closed field of
characteristic 2. In contrast to this,
Krugljak \cite{kru} showed that if one
could solve the corresponding problem
for groups of type $(p,p)$ with $p>2$,
then one could classify the
representations of {\it any} group over
an algebraically closed field of
characteristic $p$; Heller and Reiner
\cite{hel_rei} showed this for groups
of type (2,2,2).

\section{Representations of quivers}
\label{s2}

Classification problems for systems of
linear mappings may be formulated in
terms of a quiver and its
representations introduced by Gabriel
\cite{gab} (see also \cite{gab_roi}). A
{\it quiver} is a directed graph. Its
{\it representation} ${\cal A}$ over a
field $k$ is given by assigning to each
vertex $v$ a vector space $V_v$ over
$k$ and to each arrow $\alpha:u\to v$ a
linear mapping ${\cal
A}_{\alpha}:V_u\to V_v$ of the
corresponding vector spaces. Two
representations $\cal A$ and ${\cal
A}'$ are {\it isomorphic} if there
exists a system of linear bijections
${\cal S}_v:V_v\to V'_v$ transforming
$\cal A$ to ${\cal A}'$; that is, for
which the diagram
\begin{equation}\label{1.0}
\begin{CD}
 V_u @>{\cal A}_{\alpha}>> V_v\\
 @V{\cal S}_uVV @VV{\cal S}_vV\\
 V'_u @>{\cal A}'_{\alpha}>> V'_v
\end{CD}
\end{equation}
is commutative (${\cal S}_v{\cal
A}_{\alpha}= {\cal A}'_{\alpha}{\cal
S}_u$) for every arrow $\alpha:
u\longrightarrow v$. The {\it direct
sum} of ${\cal A}$ and ${\cal A}'$ is
the representation ${\cal A}\oplus{\cal
A}'$ formed by $V_v\oplus V'_v$ and
${\cal A}_{\lambda}\oplus{\cal
A}'_{\lambda}$.

For example, the problems of
classifying representations of the
quivers \quiv , \!
\unitlength 0.3mm \linethickness{0.4pt}
\begin{picture}(22.12,9.55)(2,1.3)
\put(1.27,4.33){\circle*{0.89}}
\put(21.67,4.33){\circle*{0.89}}
\put(19.67,5.33){\vector(3,-1){0.2}}
\multiput(2.67,4.77)(0.20,0.11){10}{\line(1,0){0.20}}
\multiput(4.69,5.87)(0.30,0.11){7}{\line(1,0){0.30}}
\multiput(6.78,6.66)(0.54,0.12){4}{\line(1,0){0.54}}
\multiput(8.93,7.14)(1.11,0.08){2}{\line(1,0){1.11}}
\multiput(11.15,7.31)(1.14,-0.07){2}{\line(1,0){1.14}}
\multiput(13.43,7.16)(0.59,-0.11){4}{\line(1,0){0.59}}
\multiput(15.78,6.71)(0.32,-0.11){12}{\line(1,0){0.32}}
\put(19.67,3.33){\vector(3,1){0.2}}
\multiput(2.67,3.89)(0.20,-0.11){10}{\line(1,0){0.20}}
\multiput(4.69,2.76)(0.30,-0.12){7}{\line(1,0){0.30}}
\multiput(6.78,1.95)(0.43,-0.10){5}{\line(1,0){0.43}}
\multiput(8.93,1.47)(1.11,-0.08){2}{\line(1,0){1.11}}
\multiput(11.15,1.30)(1.14,0.07){2}{\line(1,0){1.14}}
\multiput(13.43,1.45)(0.59,0.12){4}{\line(1,0){0.59}}
\multiput(15.78,1.92)(0.32,0.12){12}{\line(1,0){0.32}}
\end{picture}\,,
and \pair \, are the problems of
classifying linear operators (whose
solution is the Jordan of Frobenius
normal form), pairs of linear mappings
from one space to another (the matrix
pencil problem, solved by Kronecker),
and pairs of linear operators in a
vector space (i.e., pairs of matrices
up to simultaneous similarity).

Furthermore, a representation of the
quiver
\begin{equation}       \label{2.1}
\special{em:linewidth 0.4pt}
\unitlength 0.60mm
\linethickness{0.4pt}
\begin{picture}(139.00,22.00)
(0,17.3)
\emline{116.00}{11.00}{1}{119.51}{11.83}{2}
\emline{119.51}{11.83}{3}{122.64}{12.43}{4}
\emline{122.64}{12.43}{5}{125.39}{12.80}{6}
\emline{125.39}{12.80}{7}{127.76}{12.94}{8}
\emline{127.76}{12.94}{9}{129.75}{12.85}{10}
\emline{129.75}{12.85}{11}{131.37}{12.54}{12}
\emline{131.37}{12.54}{13}{132.61}{12.00}{14}
\emline{132.61}{12.00}{15}{133.47}{11.23}{16}
\emline{133.47}{11.23}{17}{134.00}{10.00}{18}
\put(116.00,9.00){\vector(-4,1){0.2}}
\emline{134.00}{10.00}{19}{133.59}{8.89}{20}
\emline{133.59}{8.89}{21}{132.81}{8.02}{22}
\emline{132.81}{8.02}{23}{131.65}{7.40}{24}
\emline{131.65}{7.40}{25}{130.11}{7.02}{26}
\emline{130.11}{7.02}{27}{128.19}{6.89}{28}
\emline{128.19}{6.89}{29}{125.89}{7.00}{30}
\emline{125.89}{7.00}{31}{123.22}{7.36}{32}
\emline{123.22}{7.36}{33}{120.16}{7.96}{34}
\emline{120.16}{7.96}{35}{116.00}{9.00}{36}
\emline{24.00}{11.00}{37}{20.49}{11.83}{38}
\emline{20.49}{11.83}{39}{17.36}{12.43}{40}
\emline{17.36}{12.43}{41}{14.61}{12.80}{42}
\emline{14.61}{12.80}{43}{12.24}{12.94}{44}
\emline{12.24}{12.94}{45}{10.25}{12.85}{46}
\emline{10.25}{12.85}{47}{8.63}{12.54}{48}
\emline{8.63}{12.54}{49}{7.39}{12.00}{50}
\emline{7.39}{12.00}{51}{6.53}{11.23}{52}
\emline{6.53}{11.23}{53}{6.00}{10.00}{54}
\put(24.00,9.00){\vector(4,1){0.2}}
\emline{6.00}{10.00}{55}{6.41}{8.89}{56}
\emline{6.41}{8.89}{57}{7.19}{8.02}{58}
\emline{7.19}{8.02}{59}{8.35}{7.40}{60}
\emline{8.35}{7.40}{61}{9.89}{7.02}{62}
\emline{9.89}{7.02}{63}{11.81}{6.89}{64}
\emline{11.81}{6.89}{65}{14.11}{7.00}{66}
\emline{14.11}{7.00}{67}{16.78}{7.36}{68}
\emline{16.78}{7.36}{69}{19.84}{7.96}{70}
\emline{19.84}{7.96}{71}{24.00}{9.00}{72}
\put(28.00,10.00){\makebox(0,0)[cc]{1}}
\put(112.00,10.00){\makebox(0,0)[cc]{3}}
\put(108.00,7.00){\vector(1,0){0.2}}
\emline{32.00}{7.00}{79}{108.00}{7.00}{80}
\put(108.00,10.00){\vector(1,0){0.2}}
\emline{32.00}{10.00}{73}{108.00}{10.00}{74}
\put(70.00,34.00){\makebox(0,0)[cc]{2}}
\put(1.00,10.00){\makebox(0,0)[cc]{$\alpha$}}
\put(139.00,10.00){\makebox(0,0)[cc]{$
\zeta$}}
\put(70.00,15.00){\makebox(0,0)[cc]{$\gamma$}}
\put(48.00,25.00){\makebox(0,0)[cc]{$\beta$}}
\put(91.00,25.00){\makebox(0,0)[cc]{$\varepsilon
$}}
\put(66.00,29.00){\vector(2,1){0.2}}
\emline{32.00}{12.00}{75}{66.00}{29.00}{76}
\put(108.00,12.00){\vector(2,-1){0.2}}
\emline{74.00}{29.00}{77}{108.00}{12.00}{78}
\put(70.00,3.00){\makebox(0,0)[cc]{$
\delta$}}
\end{picture}\vspace*{8mm}
\end{equation}
over a field $k$ is a set of linear
mappings
\begin{equation}       \label{2.2}
\special{em:linewidth 0.4pt}
\unitlength 0.70mm
\linethickness{0.4pt}
\begin{picture}(139.00,22.00)
(0,17.3)
\emline{116.00}{11.00}{1}{119.51}{11.83}{2}
\emline{119.51}{11.83}{3}{122.64}{12.43}{4}
\emline{122.64}{12.43}{5}{125.39}{12.80}{6}
\emline{125.39}{12.80}{7}{127.76}{12.94}{8}
\emline{127.76}{12.94}{9}{129.75}{12.85}{10}
\emline{129.75}{12.85}{11}{131.37}{12.54}{12}
\emline{131.37}{12.54}{13}{132.61}{12.00}{14}
\emline{132.61}{12.00}{15}{133.47}{11.23}{16}
\emline{133.47}{11.23}{17}{134.00}{10.00}{18}
\put(116.00,9.00){\vector(-4,1){0.2}}
\emline{134.00}{10.00}{19}{133.59}{8.89}{20}
\emline{133.59}{8.89}{21}{132.81}{8.02}{22}
\emline{132.81}{8.02}{23}{131.65}{7.40}{24}
\emline{131.65}{7.40}{25}{130.11}{7.02}{26}
\emline{130.11}{7.02}{27}{128.19}{6.89}{28}
\emline{128.19}{6.89}{29}{125.89}{7.00}{30}
\emline{125.89}{7.00}{31}{123.22}{7.36}{32}
\emline{123.22}{7.36}{33}{120.16}{7.96}{34}
\emline{120.16}{7.96}{35}{116.00}{9.00}{36}
\emline{24.00}{11.00}{37}{20.49}{11.83}{38}
\emline{20.49}{11.83}{39}{17.36}{12.43}{40}
\emline{17.36}{12.43}{41}{14.61}{12.80}{42}
\emline{14.61}{12.80}{43}{12.24}{12.94}{44}
\emline{12.24}{12.94}{45}{10.25}{12.85}{46}
\emline{10.25}{12.85}{47}{8.63}{12.54}{48}
\emline{8.63}{12.54}{49}{7.39}{12.00}{50}
\emline{7.39}{12.00}{51}{6.53}{11.23}{52}
\emline{6.53}{11.23}{53}{6.00}{10.00}{54}
\put(24.00,9.00){\vector(4,1){0.2}}
\emline{6.00}{10.00}{55}{6.41}{8.89}{56}
\emline{6.41}{8.89}{57}{7.19}{8.02}{58}
\emline{7.19}{8.02}{59}{8.35}{7.40}{60}
\emline{8.35}{7.40}{61}{9.89}{7.02}{62}
\emline{9.89}{7.02}{63}{11.81}{6.89}{64}
\emline{11.81}{6.89}{65}{14.11}{7.00}{66}
\emline{14.11}{7.00}{67}{16.78}{7.36}{68}
\emline{16.78}{7.36}{69}{19.84}{7.96}{70}
\emline{19.84}{7.96}{71}{24.00}{9.00}{72}
\put(28.00,10.00){\makebox(0,0)[cc]{$V_1$}}
\put(112.00,10.00){\makebox(0,0)[cc]{$V_3$}}
\put(108.00,7.00){\vector(1,0){0.2}}
\emline{32.00}{7.00}{79}{108.00}{7.00}{80}
\put(108.00,10.00){\vector(1,0){0.2}}
\emline{32.00}{10.00}{73}{108.00}{10.00}{74}
\put(70.00,34.00){\makebox(0,0)[cc]{$V_2$}}
\put(1.00,10.00){\makebox(0,0)[cc]
{${\cal A}_{\alpha}$}}
\put(139.00,10.00){\makebox(0,0)[cc]
{${\cal A}_{\zeta}$}}
\put(70.00,15.00){\makebox(0,0)[cc]
{${\cal A}_{\gamma}$}}
\put(48.00,25.00){\makebox(0,0)[cc]
{${\cal A}_{\beta}$}}
\put(91.00,25.00){\makebox(0,0)[cc]
{${\cal A}_{\varepsilon}$}}
\put(66.00,29.00){\vector(2,1){0.2}}
\emline{32.00}{12.00}{75}{66.00}{29.00}{76}
\put(108.00,12.00){\vector(2,-1){0.2}}
\emline{74.00}{29.00}{77}{108.00}{12.00}{78}
\put(70.00,3.00){\makebox(0,0)[cc]
{${\cal A}_{\delta}$}}
\end{picture}
\end{equation}\\[6mm]
Let $n_1,n_2,n_3$ be the dimensions of
$V_1,V_2,V_3$; selecting bases in these
spaces, we can give the representation
\eqref{2.2} by the sequence
\begin{multline}\label{2.3}
A=(A_{\alpha},\,A_{\beta},\,
A_{\gamma},\, A_{\delta},\,
A_{\varepsilon},\,A_{\zeta})
\\
 \in k^{n_1\times n_1}\times
 k^{n_2\times n_1}\times
 k^{n_3\times n_1}\times
 k^{n_3\times n_1}\times
 k^{n_3\times n_2}\times
 k^{n_3\times n_3}
\end{multline}
of matrices of linear mappings ${\cal
A}_{\alpha},\,{\cal A}_{\beta},\, {\cal
A}_{\gamma},\, {\cal A}_{\delta},\,
{\cal A}_{\varepsilon},\,{\cal
A}_{\zeta}$. If a sequence of matrices
$A'=(A'_{\alpha},\,A'_{\beta},\dots,
A'_{\zeta})$ gives an isomorphic
representation, then
\begin{equation}\label{2.4}
A'=(S_1A_{\alpha}S_1^{-1},\,
S_2A_{\beta}S_1^{-1},\,
S_3A_{\gamma}S_1^{-1},\,
S_3A_{\delta}S_1^{-1},\,
S_3A_{\varepsilon}S_2^{-1},\, S_3
A_{\zeta}S_3^{-1}),
\end{equation}
where $S_1,S_2,S_3$ are the matrices of
linear bijections ${\cal S}_1,{\cal
S}_2,{\cal S}_3$ (see \eqref{1.0}).
Note that the change of bases in
$V_1,V_2,V_3$ by matrices
$S_1^{-1},S_2^{-1},S_3^{-1}$ also
transforms $A$ to $A'$; that is, $A$
and $A'$ give the same representation
\eqref{2.2} but in different bases.

Therefore, the problem of classifying
representations of the quiver
\eqref{2.1} reduces to the problem of
classifying matrix sequences
\eqref{2.3} up to transformations
\eqref{2.4} with nonsingular matrices
$S_1,S_2,S_3$.

The list of tame quivers and a
classification of their representations
were obtained independently by Donovan
and Freislich \cite{don1} and Nazarova
\cite{naz} (see also \cite[Sect. 11]
{gab_roi}). They proved that a
connected quiver is tame if and only if
it is a subquiver of (or coincides
with) one of the quivers\\[-6mm]
\begin{equation}\label{2.4a}
\unitlength 0.7mm \linethickness{0.4pt}
\begin{picture}(185.00,45.00)(22,0)
\put(40.00,25.00){\makebox(0,0)[cc]{$\bullet$}}
\put(50.00,25.00){\makebox(0,0)[cc]{$\bullet$}}
\put(60.00,25.00){\makebox(0,0)[cc]{$\bullet$}}
\put(40.00,25.00){\line(1,0){25.00}}
\put(70.00,25.00){\makebox(0,0)[cc]{$\dots$}}
\put(80.00,25.00){\makebox(0,0)[cc]{$\bullet$}}
\put(90.00,25.00){\makebox(0,0)[cc]{$\bullet$}}
\put(100.00,25.00){\makebox(0,0)[cc]{$\bullet$}}
\put(100.00,25.00){\line(-1,0){25.00}}
\bezier{288}(40.00,25.00)(70.00,45.00)(100.00,25.00)
\put(125.00,15.00){\makebox(0,0)[cc]{$\bullet$}}
\put(135.00,25.00){\makebox(0,0)[cc]{$\bullet$}}
\put(125.00,35.00){\makebox(0,0)[cc]{$\bullet$}}
\put(145.00,25.00){\makebox(0,0)[cc]{$\bullet$}}
\put(125.00,15.00){\line(1,1){10.00}}
\put(135.00,25.00){\line(1,0){15.00}}
\put(135.00,25.00){\line(-1,1){10.00}}
\put(155.00,25.00){\makebox(0,0)[cc]{$\dots$}}
\put(185.00,15.00){\makebox(0,0)[cc]{$\bullet$}}
\put(175.00,25.00){\makebox(0,0)[cc]{$\bullet$}}
\put(185.00,35.00){\makebox(0,0)[cc]{$\bullet$}}
\put(165.00,25.00){\makebox(0,0)[cc]{$\bullet$}}
\put(185.00,15.00){\line(-1,1){10.00}}
\put(175.00,25.00){\line(-1,0){15.00}}
\put(175.00,25.00){\line(1,1){10.00}}
\put(40.00,-5.00){\makebox(0,0)[cc]{$\bullet$}}
\put(50.00,-5.00){\makebox(0,0)[cc]{$\bullet$}}
\put(60.00,-5.00){\makebox(0,0)[cc]{$\bullet$}}
\put(70.00,-5.00){\makebox(0,0)[cc]{$\bullet$}}
\put(80.00,-5.00){\makebox(0,0)[cc]{$\bullet$}}
\put(90.00,-5.00){\makebox(0,0)[cc]{$\bullet$}}
\put(100.00,-5.00){\makebox(0,0)[cc]{$\bullet$}}
\put(70.00,5.00){\makebox(0,0)[cc]{$\bullet$}}
\put(70.00,5.00){\line(0,-1){10.00}}
\put(40.00,-5.00){\line(1,0){60.00}}
\put(40.00,-30.00){\makebox(0,0)[cc]{$\bullet$}}
\put(50.00,-30.00){\makebox(0,0)[cc]{$\bullet$}}
\put(60.00,-30.00){\makebox(0,0)[cc]{$\bullet$}}
\put(70.00,-30.00){\makebox(0,0)[cc]{$\bullet$}}
\put(80.00,-30.00){\makebox(0,0)[cc]{$\bullet$}}
\put(90.00,-30.00){\makebox(0,0)[cc]{$\bullet$}}
\put(100.00,-30.00){\makebox(0,0)[cc]{$\bullet$}}
\put(40.00,-30.00){\line(1,0){60.00}}
\put(135.00,-20.00){\makebox(0,0)[cc]{$\bullet$}}
\put(145.00,-20.00){\makebox(0,0)[cc]{$\bullet$}}
\put(155.00,-20.00){\makebox(0,0)[cc]{$\bullet$}}
\put(165.00,-20.00){\makebox(0,0)[cc]{$\bullet$}}
\put(175.00,-20.00){\makebox(0,0)[cc]{$\bullet$}}
\put(155.00,-10.00){\makebox(0,0)[cc]{$\bullet$}}
\put(155.00,-10.00){\line(0,-1){10.00}}
\put(155.00,0.00){\makebox(0,0)[cc]{$\bullet$}}
\put(155.00,0.00){\line(0,-1){10.00}}
\put(135.00,-20.00){\line(1,0){40.00}}
\put(110.00,-30.00){\makebox(0,0)[cc]{$\bullet$}}
\put(110.00,-30.00){\line(-1,0){10.00}}
\put(90.00,-20.00){\makebox(0,0)[cc]{$\bullet$}}
\put(90.00,-20.00){\line(0,-1){10.00}}
\end{picture}
\vspace*{22mm}
\end{equation}
with an arbitrary orientation of edges.

As follows from the next theorem, the
problem of classifying quiver
representations has the same complexity
for all wild quivers.

\begin{theorem} \label{t2.1}
The problem of classifying pairs of
matrices up to simultaneous similarity
contains the problem of classifying
representations of an arbitrary quiver.
\end{theorem}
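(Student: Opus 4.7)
My plan is to give an explicit construction $\mathcal{T}$ sending a representation of a given quiver $Q$ to a pair of square matrices $(X,Y)$, so that isomorphism of representations corresponds exactly to simultaneous similarity of pairs. Let $Q$ have vertices $v_1,\dots,v_t$ and arrows $\alpha_1,\dots,\alpha_s$ with $\alpha_j\colon v_{u_j}\to v_{w_j}$. For a representation with $V_i=k^{n_i}$ and arrow matrices $A_j=\mathcal{A}_{\alpha_j}\in k^{n_{w_j}\times n_{u_j}}$, I form the enlarged space
\[
W \;=\; V_1\oplus\cdots\oplus V_t\;\oplus\; V_{u_1}^{(1)}\oplus\cdots\oplus V_{u_s}^{(s)},
\]
where $V_{u_j}^{(j)}$ is a fresh copy of $V_{u_j}$, one per arrow. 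Assuming the field contains $t+s$ pairwise distinct scalars $\lambda_1,\dots,\lambda_t,\mu_1,\dots,\mu_s$ (the small-field case is handled by a minor variation), I take
\[
X = \operatorname{diag}(\lambda_1 I_{n_1},\dots,\lambda_t I_{n_t},\mu_1 I_{n_{u_1}},\dots,\mu_s I_{n_{u_s}}),
\]
which depends only on the dimension vector, not on the arrow matrices.

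The matrix $Y$ carries the arrow data: for each arrow $\alpha_j$, I place an identity block $I_{n_{u_j}}$ from the $V_{u_j}$-summand into the auxiliary $V_{u_j}^{(j)}$-summand and the arrow matrix $A_j$ from the $V_{u_j}^{(j)}$-summand into the $V_{w_j}$-summand, with all remaining blocks zero. The output $\mathcal{T}(A_1,\dots,A_s):=(X,Y)$ is then a pair of square matrices of the same size whose entries are $0$, scalars, or entries of the $A_j$, so condition~(i) of the definition of containment holds in the form required by the framework introduced before Theorem~\ref{t1.2}.

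For condition~(ii), distinctness of the eigenvalues of $X$ forces every matrix commuting with $X$ to be block-diagonal with respect to the summand decomposition of $W$. Hence any simultaneous similarity $S$ between $(X,Y)$ and $(X,Y')$ has the form $S=\operatorname{diag}(S_1,\dots,S_t,T_1,\dots,T_s)$ with $S_i\in\operatorname{GL}_{n_i}(k)$ and $T_j\in\operatorname{GL}_{n_{u_j}}(k)$. Conjugation sends the identity block of $Y$ to $T_jS_{u_j}^{-1}$, so matching with $Y'$ forces $T_j=S_{u_j}$. The arrow block $A_j$ is then sent to $S_{w_j}A_jS_{u_j}^{-1}$, which equals $A'_j$ iff the representations are isomorphic via $(S_1,\dots,S_t)$ in the sense of diagram~\eqref{1.0}. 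Conversely, any representation isomorphism $(S_1,\dots,S_t)$ extends to a simultaneous similarity by setting $T_j:=S_{u_j}$.

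The ``one copy per arrow'' device is the crucial point: without these extra copies the construction collapses as soon as several arrows share the same source-target pair (including multiple loops), because a single block of $Y$ cannot hold several distinct arrow matrices. Giving each arrow its own summand, tagged by its own eigenvalue of $X$, decouples the arrow data, and the identity blocks serve to relink each auxiliary copy to the vertex-level action, so that the pair-similarity relation coincides exactly with representation isomorphism. The only delicate step is therefore the commutant computation that produces the block-diagonal form of $S$ together with the constraint $T_j=S_{u_j}$; everything else is direct substitution.
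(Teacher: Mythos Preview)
Your proof is correct and follows essentially the same strategy as the paper: a block-scalar first matrix with pairwise distinct eigenvalues forces any simultaneous similarity to be block-diagonal, and identity blocks in the second matrix tie the auxiliary copies to the vertex transformations so that the remaining freedom is exactly the quiver isomorphism $(S_1,\dots,S_t)$. The paper carries this out on one illustrative quiver, adding an extra copy of a vertex space only where parallel arrows would collide, while you give the uniform ``one auxiliary source copy per arrow'' recipe; the mechanism (commutant of $X$, then reading off $T_j=S_{u_j}$ and $A'_j=S_{w_j}A_jS_{u_j}^{-1}$ from $Y$) is identical.
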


\begin{proof}
We will prove the theorem for
representations of the quiver
\eqref{2.1} since the proof for the
other quivers is analogous. For each
sequence \eqref{2.3}, we construct the
pair of matrices
\begin{equation}       \label{2.5}
(M,N)=\left(\begin{bmatrix}
I_{n_1}&0&0&0\\ 0&2I_{n_2}&0&0\\
0&0&3I_{n_3}&0\\ 0&0&0&4I_{n_3}
\end{bmatrix},
\begin{bmatrix}
A_{\alpha}&0&0&0\\ A_{\beta}&0&0&0\\
A_{\gamma}&0&0&0  \\ A_{\delta}&
A_{\varepsilon}&I_{n_3}& A_{\zeta}
\end{bmatrix}\right).
\end{equation}
Let $(M,N')$ be analogously constructed
from
$$
A'=(A'_{\alpha},\,A'_{\beta},\,
A'_{\gamma},\, A'_{\delta},\,
A'_{\varepsilon},\,A'_{\zeta}),
$$
and let the pairs $(M,N)$ and $(M,N')$
be simultaneously similar:
\begin{equation}\label{2.5'}
  S^{-1}MS=M,\quad S^{-1}NS=N'.
\end{equation}
The equality $MS=SM$ implies
$$
S=S_1\oplus S_2\oplus S_3\oplus S_4.
$$
Equating in $NS=SN'$ the blocks with
indices (4,3) gives $S_3=S_4$. By the
second equality in \eqref{2.5'}, the
pairs $(M,N)$ and $(M,N')$ are
simultaneously similar if and only if
$A'$ is obtained from $A$ by
transformations \eqref{2.4}.
\end{proof}

\section{Representations of posets}
\label{s3}

Many matrix problems may be formulated
in terms of representations of
partially ordered sets introduced by
Nazarova and Roiter \cite{naz_roi}; see
also \cite[Sect. 1.3]{gab_roi}. Let
$\preceq$ be a reflective binary
relation in $T=\{1,\,2,\dots,t\}$. A
{\it representation} of $(T,\preceq)$
is a block matrix
$$A=[A_1\,|\,A_2\,|\dots|\,A_t].$$
Two representations are {\it
isomorphic} if one reduces to the other
by the following transformations:
\begin{itemize}
  \item[(a)] elementary
row-transformations of the whole
matrix;
  \item[(b)] elementary
column-transformations within each
vertical strip;
  \item[(c)] additions of a column of
$A_i$ to a column of $A_j$ if $i\prec
j$.
\end{itemize}
The {\it direct sum} of representations
$A$ and $A'$ is the representation
$$
A\oplus
A'=\left[\begin{tabular}{cc|cc|c|cc}
 $A_1$&0& $A_2$&0& $\dots$ &$A_t$&0\\
 0&$A'_1$& 0&$A'_2$& $\dots$ &0&$A'_t$
\end{tabular}\right].
$$

Without loss of generality, we will
suppose that $(T,\preceq)$ is a
partially ordered set. Indeed, if
$i\prec j$ and $i\succ j$, then we may
join strips $i$ and $j$ to a single
strip with arbitrary
column-transformations within it. If
$i\prec j$ and $j\prec l$, then we may
add a column $a$ of $A_i$ to a column
$c$ of $A_l$ through a column $b$ of
$A_j$:
$$
(a,b,c)\mapsto (a,a+b,c) \mapsto
(a,a+b,a+b+c) \mapsto (a,b,a+b+c)
\mapsto (a,b,a+c).
$$
Hence, we may put $i\prec l$, leaving
the set of admissible transformations
unchanged. Since every partial ordering
relation in a finite set is
supplemented to a linear ordering
relation, we suppose
\begin{equation*}\label{1.1}
i\prec j\ \Longrightarrow \ i<j
\end{equation*}
(that is, every addition between strips
is from left to right).

For instance, every representation of
$(\{1,2,3\},\le)$ reduces to the form
 $$ \left[
 \begin{tabular}{cc|cc|cc}
  $I$&$0$ & $0$&$0$ & $0$&$0$\\
  $0$&$0$ & $I$&$0$ & $0$&$0$\\
  $0$&$0$ & $0$&$0$ & $I$&$0$\\
  $0$&$0$ & $0$&$0$ & $0$&$0$
 \end{tabular}\right].$$

The following theorem is a well-known
corollary of the Krull--Schmidt theorem
\cite[Sect. 1, Theorem 3.6]{bas} for
additive categories (the categories of
representations of quivers and posets
are additive).

\begin{theorem}          \label{t1.1}
Every representation of a quiver or
poset decomposes into a direct sum of
indecomposable representations
uniquely, up to isomorphism of
summands.
\end{theorem}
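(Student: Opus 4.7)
The plan is to invoke the Krull--Schmidt theorem for additive categories, exactly as signaled in the statement, and therefore my task reduces to (i) identifying the relevant additive category in each case and (ii) checking the two hypotheses that make the theorem apply: every object is a finite direct sum of objects with local endomorphism rings. For a quiver $Q$, the category $\mathrm{rep}_k(Q)$ has as objects the representations described in Section~\ref{s2} and as morphisms the systems of linear maps ${\cal S}_v\colon V_v\to V'_v$ making every square \eqref{1.0} commute; it is plainly additive with the direct sum given in Section~\ref{s2}. For a poset $(T,\preceq)$, I would define a morphism $A\to A'$ between block matrices to be a pair $(R,C)$ of matrices, $R$ arbitrary of the appropriate size and $C$ block--upper--triangular with respect to $\preceq$ (diagonal blocks $C_{ii}$ arbitrary, off--diagonal block $C_{ij}$ allowed only when $i\prec j$), subject to $RA = A'C$. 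Isomorphisms are then exactly the pairs assembled from transformations (a)--(c), and the direct sum in the excerpt turns this into an additive category with finite--dimensional $\mathrm{Hom}$--spaces.

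The existence of a decomposition into indecomposables is immediate by induction on the total dimension of the underlying space(s): any properly decomposable object $V=V'\oplus V''$ has both summands of strictly smaller total dimension, and the induction terminates since the total dimension is a nonnegative integer. The only substantive point is uniqueness, and this comes from the classical fact that the endomorphism algebra $\mathrm{End}(V)$ of an indecomposable $V$ in either of these categories is a \emph{local} ring. Since $\mathrm{End}(V)$ is a finite--dimensional $k$--algebra, Fitting's lemma applied simultaneously at every vertex (for quivers) or on the row and column spaces (for posets) shows that every $\varphi\in\mathrm{End}(V)$ induces a direct--sum decomposition $V=\mathrm{im}(\varphi^N)\oplus\ker(\varphi^N)$ for large $N$, compatibly with all structure maps. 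Indecomposability forces one summand to vanish, so $\varphi$ is either nilpotent or invertible; the non--invertible elements thus form a (two--sided) ideal, which is the defining property of a local ring.

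Once $\mathrm{End}(V)$ is local for each indecomposable $V$, the standard Krull--Schmidt exchange argument---given two decompositions $V=\bigoplus V_i=\bigoplus W_j$ into indecomposables, compose the inclusion $V_1\hookrightarrow V$ with the projections onto the $W_j$, observe that not all the resulting maps can be non--isomorphisms (otherwise their sum, the identity on $V_1$, would lie in the maximal ideal of $\mathrm{End}(V_1)$), pick one that is an isomorphism, and iterate---yields the desired bijection between the two lists of summands up to isomorphism, which is the content of the theorem. The main obstacle in carrying this out rigorously is not the Krull--Schmidt induction itself, which is entirely formal, but rather setting up the poset category carefully enough that the block--triangular morphisms really do compose and that Fitting's lemma can be applied blockwise; after that, the argument is verbatim the one in \cite[Sect.~1, Theorem~3.6]{bas}.
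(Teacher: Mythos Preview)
Your approach is exactly what the paper does, only more so: the paper does not prove this theorem at all but simply records, in the sentence immediately preceding the statement, that it is ``a well-known corollary of the Krull--Schmidt theorem \cite[Sect.~1, Theorem~3.6]{bas} for additive categories (the categories of representations of quivers and posets are additive).'' Your proposal fills in precisely the details the paper leaves to that citation---identifying the additive categories, checking finite-dimensionality of $\mathrm{Hom}$, and verifying via Fitting's lemma that indecomposables have local endomorphism rings---so you have supplied more than the paper itself does.
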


Nazarova \cite{naz1} proved that a
poset is wild if and only if it
contains a subset from the following
list:
$$
\unitlength 0.7mm \linethickness{0.4pt}
\begin{picture}(155.00,50.00)
\put(0.00,0.00){\makebox(0,0)[cc]{$\bullet$}}
\put(5.00,0.00){\makebox(0,0)[cc]{$\bullet$}}
\put(10.00,0.00){\makebox(0,0)[cc]{$\bullet$}}
\put(15.00,0.00){\makebox(0,0)[cc]{$\bullet$}}
\put(20.00,0.00){\makebox(0,0)[cc]{$\bullet$}}
\put(25.00,0.00){\makebox(0,0)[cc]{;}}
\put(35.00,0.00){\makebox(0,0)[cc]{$\bullet$}}
\put(40.00,0.00){\makebox(0,0)[cc]{$\bullet$}}
\put(45.00,0.00){\makebox(0,0)[cc]{$\bullet$}}
\put(50.00,0.00){\makebox(0,0)[cc]{$\bullet$}}
\put(50.00,10.00){\makebox(0,0)[cc]{$\bullet$}}
\put(50.00,10.00){\line(0,-1){10.00}}
\put(55.00,0.00){\makebox(0,0)[cc]{;}}
\put(65.00,0.00){\makebox(0,0)[cc]{$\bullet$}}
\put(70.00,0.00){\makebox(0,0)[cc]{$\bullet$}}
\put(75.00,0.00){\makebox(0,0)[cc]{$\bullet$}}
\put(75.00,10.00){\makebox(0,0)[cc]{$\bullet$}}
\put(70.00,10.00){\makebox(0,0)[cc]{$\bullet$}}
\put(65.00,10.00){\makebox(0,0)[cc]{$\bullet$}}
\put(65.00,10.00){\line(0,-1){10.00}}
\put(75.00,0.00){\line(0,1){10.00}}
\put(70.00,10.00){\line(0,-1){10.00}}
\put(80.00,0.00){\makebox(0,0)[cc]{;}}
\put(75.00,20.00){\makebox(0,0)[cc]{$\bullet$}}
\put(75.00,20.00){\line(0,-1){10.00}}
\put(90.00,0.00){\makebox(0,0)[cc]{$\bullet$}}
\put(95.00,0.00){\makebox(0,0)[cc]{$\bullet$}}
\put(95.00,10.00){\makebox(0,0)[cc]{$\bullet$}}
\put(95.00,20.00){\makebox(0,0)[cc]{$\bullet$}}
\put(95.00,20.00){\line(0,-1){20.00}}
\put(100.00,0.00){\makebox(0,0)[cc]{$\bullet$}}
\put(100.00,10.00){\makebox(0,0)[cc]{$\bullet$}}
\put(100.00,20.00){\makebox(0,0)[cc]{$\bullet$}}
\put(100.00,30.00){\makebox(0,0)[cc]{$\bullet$}}
\put(100.00,30.00){\line(0,-1){30.00}}
\put(105.00,0.00){\makebox(0,0)[cc]{;}}
\put(115.00,0.00){\makebox(0,0)[cc]{$\bullet$}}
\put(120.00,0.00){\makebox(0,0)[cc]{$\bullet$}}
\put(120.00,10.00){\makebox(0,0)[cc]{$\bullet$}}
\put(120.00,10.00){\line(0,-1){10.00}}
\put(125.00,0.00){\makebox(0,0)[cc]{$\bullet$}}
\put(125.00,10.00){\makebox(0,0)[cc]{$\bullet$}}
\put(125.00,20.00){\makebox(0,0)[cc]{$\bullet$}}
\put(125.00,30.00){\makebox(0,0)[cc]{$\bullet$}}
\put(125.00,40.00){\makebox(0,0)[cc]{$\bullet$}}
\put(125.00,50.00){\makebox(0,0)[cc]{$\bullet$}}
\put(125.00,50.00){\line(0,-1){50.00}}
\put(130.00,0.00){\makebox(0,0)[cc]{;}}
\put(140.00,0.00){\makebox(0,0)[cc]{$\bullet$}}
\put(140.00,10.00){\makebox(0,0)[cc]{$\bullet$}}
\put(145.00,0.00){\makebox(0,0)[cc]{$\bullet$}}
\put(145.00,10.00){\makebox(0,0)[cc]{$\bullet$}}
\put(145.00,10.00){\line(0,-1){10.00}}
\put(145.00,0.00){\line(-1,2){5.00}}
\put(140.00,10.00){\line(0,-1){10.00}}
\put(150.00,0.00){\makebox(0,0)[cc]{$\bullet$}}
\put(150.00,10.00){\makebox(0,0)[cc]{$\bullet$}}
\put(150.00,20.00){\makebox(0,0)[cc]{$\bullet$}}
\put(150.00,30.00){\makebox(0,0)[cc]{$\bullet$}}
\put(150.00,40.00){\makebox(0,0)[cc]{$\bullet$}}
\put(150.00,40.00){\line(0,-1){40.00}}
\end{picture}
$$
($a\prec b$ if $a$ is under $b$ and
they are linked by a line).

As follows from the next theorem and
from the definition of wildness, the
problem of classifying representations
of a poset has the same complexity for
all wild posets.

\begin{theorem} \label{t3.1}
The problem of classifying pairs of
matrices up to simultaneous similarity
contains the problem of classifying
representations of an arbitrary poset.
\end{theorem}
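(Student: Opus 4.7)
The strategy mirrors that of the proof of Theorem~\ref{t2.1}. Given a representation $A=[A_1\mid\cdots\mid A_t]$ with $A_i\in k^{m\times n_i}$, I will construct a pair $(M,N)$ of square matrices whose simultaneous similarity equivalence captures the poset equivalence of $A$. The block structure will consist of a row block of size $m$, strip blocks of sizes $n_1,\ldots,n_t$, and several auxiliary blocks whose sizes and number are dictated by the poset. The matrix $M$ will be block-diagonal with distinct scalar multiples of the identity on its blocks, chosen so that any $S$ commuting with $M$ has the form $\mathrm{diag}(R,C,\ldots)$, with $R\in GL_m$ acting on the row space and $C$ an invertible block matrix acting on the direct sum of column spaces $\bigoplus_i V_i$. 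In $N$ I will place the matrices $A_j$ in the (row, strip $j$) positions and identity blocks at strategic (strip, auxiliary) positions; all other entries will be zero.

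Equating $NS=SN'$ block by block, the (row, strip $j$) blocks will yield $A'_j=R^{-1}\sum_i A_iC_{ij}$, while the (strip, auxiliary) and related blocks will yield equations that force $C=(C_{ij})$ to be ``poset-upper-triangular'': $C_{ij}=0$ whenever $i\not\preceq j$. Together with the invertibility of each $C_{ii}$, this reproduces the poset equivalence $A'=RAC$, and conversely every such equivalence will be seen to give rise to a similarity $S$ between $(M,N)$ and $(M,N')$.

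The main obstacle will be the design of the auxiliary blocks and the identity placements in $N$. In the proof of Theorem~\ref{t2.1}, the outgoing loop at vertex~$3$ was handled by duplicating that vertex and linking the two copies by an identity $I_{n_3}$ at position $(4,3)$ of $N$. For posets, the set of forbidden pairs $\{(i,j):i\not\preceq j,\ i\neq j\}$ does not correspond to any eigenvalue partition of $M$, since $\preceq$ is not an equivalence relation on $T$; one therefore has to introduce additional identity blocks that selectively force $C_{ij}=0$ for each forbidden pair without overconstraining the $C_{ij}$ with $i\prec j$. Verifying that the resulting system of block equations enforces precisely the poset equivalence, with no extra constraints and no missing freedom, will be the technical crux of the argument.
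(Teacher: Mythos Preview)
Your plan leaves the decisive construction unresolved, and the shape you sketch does not work as stated. With $M$ built from \emph{scalar} blocks only, the centralizer of $M$ is a product of general linear groups, one per eigenspace. If all $t$ column strips share one eigenvalue so that the column part $C=(C_{ij})$ of $S$ is a full block matrix, then an identity placed at position $(\text{strip }j,\text{aux }w)$ with $w$ in a different eigenspace forces $C_{ij}=0$ for \emph{every} $i\neq j$: the $(\text{strip }i,w)$ block of $NS=SN'$ reads $\delta_{ij}D_w=C_{ij}$. Placing several identities into a common auxiliary block only produces relations $\sum_{p\in P}C_{ip}=[\,i\in P\,]\,D_w$; for these to hold for every poset-upper-triangular $C$ the set $P$ must be an order ideal, after which the equations for $i\in P$ still over-determine $C$. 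If instead the strips carry distinct eigenvalues, $C$ is already block-diagonal and no placement in $N$ can create the missing off-diagonal freedom $C_{ij}$ with $i\prec j$, since $S$ itself carries none. The poset-upper-triangular group is not a product of $GL$'s and hence cannot be the centralizer of a diagonalizable $M$; you must carve it out using $N$, but your outline supplies no mechanism that does so selectively, and you yourself flag this as the unsolved ``technical crux.''

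The paper proceeds quite differently and in two steps. In Step~1 the matrix $M$ is assembled from \emph{Jordan} blocks $J_i(lI)$ of increasing sizes rather than scalar blocks; the commutant then consists of block-Toeplitz matrices whose relevant corner sub-blocks vanish for $j'>j$, so that the induced transformations on the data blocks $A_{lij}$ automatically include all column additions from strip $j'$ to strip $j$ with $j'<j$. Thus Step~1 embeds into simultaneous similarity the auxiliary problem~\eqref{3.1} with moves (i)--(iii), i.e.\ the representation problem for the \emph{linearly} ordered set $\{1<\cdots<t\}$ with extra independent row strips. Step~2 then descends from $\le$ to the given $\preceq$ not through auxiliary eigenspaces but by appending further horizontal strips $A_2,\dots,A_r$: each $A_{m+1}$ is a fixed rearrangement of the identity pattern~\eqref{3.5a}, designed so that preserving it kills exactly one additional family ${\cal A}\times{\cal B}$ of column additions, and one iterates until only the $\preceq$-admissible additions remain. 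Both the non-semisimple $M$ and this iterative ``simulation'' of the poset are essential, and neither appears in your proposal.
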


\begin{proof}
{\it Step 1:} Let us prove that the
problem of classifying pairs of
matrices up to simultaneous similarity
contains the problem of classifying
block matrices
\begin{equation}\label{3.1}
  A=\begin{bmatrix}
    A_1 \\ \vdots \\ A_r
  \end{bmatrix},\quad
  A_l=\begin{bmatrix}
    A_{l11}&\dots &A_{l1t} \\
    \hdotsfor{3}\\
   A_{lt1}&\dots &A_{ltt}
  \end{bmatrix},
\end{equation}
up to transformations:
\begin{itemize}
  \item[(i)] arbitrary elementary
transformations within each of $rt$
horizontal strips and each of $t$
vertical strips,

\item[(ii)] additions of columns of
strip $i$ to columns of strip $j$ if
$i<j$,

\item[(iii)] within each $A_l$,
additions of rows of strip $i$ to rows
of strip $j$ if $i>j$
$(i,j\in\{1,\dots,t\})$.
\end{itemize}

We first consider the case $r=1,\ t=3,$
and all $A_{lij}$ of size $1\times 1$.
Then
\begin{equation*}\label{3.2}
 A=\begin{bmatrix}
   a_{11} & a_{12} & a_{13} \\
   a_{21} & a_{22} & a_{23} \\
   a_{31} & a_{32} & a_{33}
 \end{bmatrix}.
\end{equation*}
Basing on $A$, we construct the pair of
matrices
$$
(M,N)= \left( \begin{bmatrix}
    M_1&0 \\ 0&M_2
  \end{bmatrix},\
  \begin{bmatrix}
    0&N_1 \\ 0&0
  \end{bmatrix}\right),
$$
where
$$
M_1=\left[\begin{tabular}{c|cc|ccc}
  1&&&&&   \\ \hline
  &1&&&&   \\
  &1&1&&&  \\ \hline
  &&&1&&   \\
  &&&1&1&  \\
  &&&&1&1
 \end{tabular}\right],\quad
 M_2=\left[\begin{tabular}{c|cc|ccc}
  2&&&&&   \\ \hline
  &2&&&&   \\
  &1&2&&&  \\ \hline
  &&&2&&   \\
  &&&1&2&  \\
  &&&&1&2
 \end{tabular}\right]
$$
(we omit zeros), and
$$
N_1=\left[\begin{tabular}{c|cc|ccc}
 $a_{11}$&$a_{12}$&0&$a_{13}$&0&0
  \\ \hline
 0&0&0&0&0&0 \\
 $a_{21}$&$a_{22}$&0&$a_{23}$&0&0
  \\ \hline
 0&0&0&0&0&0 \\
 0&0&0&0&0&0 \\
 $a_{31}$&$a_{32}$&0&$a_{33}$&0&0
  \end{tabular}\right].
$$
Let $(M,N')$ be analogously constructed
basing on $A'=[a_{ij}']_{i,j=1}^3$, and
let $(M,N)$ be simultaneously similar
to $(M,N')$:
$$
(S^{-1}MS,S^{-1}NS)=(M,N').
$$
Then $MS=SM$, and hence\\
\begin{equation}\label{3.3}
\unitlength 0.5mm \linethickness{0.4pt}
\begin{picture}(30.00,30.00)(-60,0)
\put(-115.00,0.00){\makebox(0,0)[cc]{$S=$}}
\put(-105.00,0.00){\line(1,0){60.00}}
\put(-105.00,20.00){\line(1,0){60.00}}
\put(-95.00,30.00){\line(0,-1){60.00}}
\put(-75.00,-30.00){\line(0,1){60.00}}
\put(-100.00,25.00){\makebox(0,0)[cc]{$\bullet$}}
\put(-90.00,15.00){\makebox(0,0)[cc]{$\bullet$}}
\put(-90.00,5.00){\makebox(0,0)[cc]{$\bullet$}}
\put(-80.00,5.00){\makebox(0,0)[cc]{$\bullet$}}
\put(-70.00,5.00){\makebox(0,0)[cc]{$\bullet$}}
\put(-70.00,-5.00){\makebox(0,0)[cc]{$\bullet$}}
\put(-70.00,-15.00){\makebox(0,0)[cc]{$\bullet$}}
\put(-70.00,-25.00){\makebox(0,0)[cc]{$\bullet$}}
\put(-60.00,-25.00){\makebox(0,0)[cc]{$\bullet$}}
\put(-50.00,-25.00){\makebox(0,0)[cc]{$\bullet$}}
\put(-60.00,-15.00){\makebox(0,0)[cc]{$\bullet$}}
\put(-90.00,-25.00){\makebox(0,0)[cc]{$\bullet$}}
\put(-90.00,15.00){\line(1,-1){10.00}}
\put(-70.00,-5.00){\line(1,-1){20.00}}
\put(-60.00,-25.00){\line(-1,1){10.00}}
\put(-40.00,0.00){\makebox(0,0)[cc]{$\oplus$}}
\put(-35.00,0.00){\line(1,0){60.00}}
\put(-35.00,20.00){\line(1,0){60.00}}
\put(-25.00,30.00){\line(0,-1){60.00}}
\put(-5.00,-30.00){\line(0,1){60.00}}
\put(-30.00,25.00){\makebox(0,0)[cc]{$\bullet$}}
\put(-20.00,15.00){\makebox(0,0)[cc]{$\bullet$}}
\put(-20.00,5.00){\makebox(0,0)[cc]{$\bullet$}}
\put(-10.00,5.00){\makebox(0,0)[cc]{$\bullet$}}
\put(0.00,5.00){\makebox(0,0)[cc]{$\bullet$}}
\put(0.00,-5.00){\makebox(0,0)[cc]{$\bullet$}}
\put(0.00,-15.00){\makebox(0,0)[cc]{$\bullet$}}
\put(0.00,-25.00){\makebox(0,0)[cc]{$\bullet$}}
\put(10.00,-25.00){\makebox(0,0)[cc]{$\bullet$}}
\put(20.00,-25.00){\makebox(0,0)[cc]{$\bullet$}}
\put(10.00,-15.00){\makebox(0,0)[cc]{$\bullet$}}
\put(-20.00,-25.00){\makebox(0,0)[cc]{$\bullet$}}
\put(-20.00,15.00){\line(1,-1){10.00}}
\put(0.00,-5.00){\line(1,-1){20.00}}
\put(10.00,-25.00){\line(-1,1){10.00}}
\put(30.00,0.00){\makebox(0,0)[cc]{,}}
\put(-90.00,25.00){\makebox(0,0)[cc]{$\bullet$}}
\put(-70.00,25.00){\makebox(0,0)[cc]{$\bullet$}}
\put(-70.00,15.00){\makebox(0,0)[cc]{$\bullet$}}
\put(-60.00,5.00){\makebox(0,0)[cc]{$\bullet$}}
\put(-60.00,5.00){\line(-1,1){10.00}}
\put(-100.00,5.00){\makebox(0,0)[cc]{$\alpha$}}
\put(-100.00,-25.00){\makebox(0,0)[cc]{$\beta$}}
\put(-90.00,-15.00){\makebox(0,0)[cc]{$\gamma$}}
\put(-80.00,-25.00){\makebox(0,0)[cc]{$\gamma$}}
\put(-30.00,5.00){\makebox(0,0)[cc]{$\bullet$}}
\put(-30.00,-25.00){\makebox(0,0)[cc]{$\bullet$}}
\put(-20.00,-15.00){\makebox(0,0)[cc]{$\bullet$}}
\put(-10.00,-25.00){\makebox(0,0)[cc]{$\bullet$}}
\put(-10.00,-25.00){\line(-1,1){10.00}}
\put(-20.00,25.00){\makebox(0,0)[cc]{$\delta$}}
\put(0.00,25.00){\makebox(0,0)[cc]{$\varepsilon$}}
\put(0.00,15.00){\makebox(0,0)[cc]{$\zeta$}}
\put(10.00,5.00){\makebox(0,0)[cc]{$\zeta$}}
\linethickness{0.8pt}
\put(-105.00,30.00){\line(0,-1){60.00}}
\put(-105.00,-30.00){\line(1,0){60.00}}
\put(-45.00,-30.00){\line(0,1){60.00}}
\put(-45.00,30.00){\line(-1,0){60.00}}
\put(-35.00,30.00){\line(0,-1){60.00}}
\put(-35.00,-30.00){\line(1,0){60.00}}
\put(25.00,-30.00){\line(0,1){60.00}}
\put(25.00,30.00){\line(-1,0){60.00}}
\end{picture}
\end{equation}
\\[11mm]
where entries linked by lines are
equal. If $S$ is diagonal, then $A'$ is
obtained by multiplying rows and
columns of $A=[a_{ij}]$ by nonzero
scalars. If all diagonal entries of $S$
are 1 and all off-diagonal entries are
0 except for $\alpha$, or $\beta$, or
$\gamma$ (respectively, $\delta$, or
$\varepsilon$, or $\zeta$), then $A'$
is obtained from $A$ by additions of
rows from top to bottom (respectively,
of columns from left to right). The
form of \eqref{3.3} implies that
$(M,N)$ and $(M,N')$ are simultaneously
similar if and only if $A$ reduces to
$A'$ by transformations (i)--(iii).

In common case, $A$ has the form
\eqref{3.1}. Basing on $A$, we
construct $(M,N)$ as follows:
$$
M=M_1\oplus\dots\oplus M_{r+1},\quad
M_l=lI\oplus J_2(lI)\oplus\dots\oplus
J_t(lI),
$$
where
$$
J_i(lI)=\begin{bmatrix}
   lI &&& \\
   I & lI & & \\
    &\ddots &\ddots&\\
    &&I&lI
 \end{bmatrix}
$$
is obtained from the $i\times i$ Jordan
block $J_i(l)$. The matrix $N$ consists
of the blocks $A_{lij}$ (see
\eqref{3.1}) and zeros; each block
$A_{lij}$ is located at the place of
those block of $M$ that is the
intersection of the last horizontal
strip of $J_i(lI)$ and the first
vertical strip of $J_j((r+1)I)$.

If $S$ commutes with $M$, then
$$
S=S_1\oplus\dots\oplus S_{r+1},\quad
S_l=[S_{lij}]_{i,j=1}^t,
$$
where each $S_{lij}$ is of the form
$$
\begin{bmatrix}
   X_1 &&&&&& \\
   X_2 & X_1&&&& & \\
    \vdots&\ddots &\ddots&&&&\\
    X_p&\dots&X_2&X_1&&&
 \end{bmatrix}
 \quad\text{or}\quad
\begin{bmatrix}
   &&&\\ &&&\\
   X_1 &&& \\
   X_2 & X_1&& \\
    \vdots&\ddots &\ddots&\\
    X_p&\dots&X_2&X_1
 \end{bmatrix}.
$$
Therefore, if we restrict ourselves to
those transformations of simultaneous
similarity with $(M,N)$ that preserve
all of its blocks except for $A_{lij}$,
then $A$ reduces by transformations
(i)--(iii).

{}\medskip

 \noindent{\it Step 2}: We prove that the
problem of classifying matrices
\eqref{3.1} up to transformations
(i)--(iii) contains the problem of
classifying representations of each
poset
$$
{\cal P}=(T,\preceq), \quad T=
\{1,\dots,t\}.
$$

Namely, we show that there exists a
block matrix $A^{(r)}=[A_l]_{l=1}^r$ of
the form \eqref{3.1} such that the set
of admissible transformations
(i)--(iii) that preserve
$A_2,\dots,A_r$ produces on its first
strip
\begin{equation}\label{3.4}
[A_{111}\,|\,A_{112}\,| \dots
|\,A_{11t}]
\end{equation}
the matrix problem given by $\cal P$;
we say in this situation that $A$ {\it
simulates} the poset $\cal P$. Of
course, this property does not depend
on the entries of $A_1$, so
$A^{(1)}=[A_1]$ simulates the linearly
ordered set $(T,\le)$.

Every poset $(T,\unlhd)$ is determined
by the set of pairs
$$
G(\unlhd)=\{(i,j)\in T\times T
\,|\,i\unlhd j\}.
$$

We will construct $A_2,\dots,A_r$
sequentially. Suppose
$A^{(m)}=[A_l]_{l=1}^m$ has been
constructed and it simulates a poset
$(T,\unlhd)$ with
$$
G(\preceq)\subset G(\unlhd),\quad
G(\preceq)\ne G(\unlhd).
$$
Let us chose $(a,b)\in
G(\unlhd)\smallsetminus G(\preceq)$ and
construct $A_{m+1}$ so that
$A^{(m+1)}=[A_l]_{l=1}^{m+1}$ simulates
a poset $(T, \sqsubseteq)$, for which
\begin{equation}\label{3.5}
G(\preceq)\subset G(\sqsubseteq)\subset
G(\unlhd),\quad (a,b)\notin
G(\sqsubseteq).
\end{equation}

If we took
\begin{equation}\label{3.5a}
A_{m+1}=\begin{bmatrix}
  0 & \dots &0& I \\
  0 & \dots &I& 0 \\
      \hdotsfor{4}\\
  I & \dots &0& 0
\end{bmatrix},
\end{equation}
then the matrix $A^{(m+1)}$ should
simulate the same poset $(T,\unlhd)$ as
$A^{(m)}$ since every admissible (with
respect to $\unlhd$) transformation
(ii) with columns of $A^{(m+1)}$ spoils
\eqref{3.5a}, but it is restored by
transformations (iii).

Thus we patch up \eqref{3.5a} as
follows. In the set of its horizontal
strips intersecting at $I$ with
vertical strips $a,a+1,\dots,b$, we
make the transposition that gathers at
the top the strips intersecting at $I$
with vertical strips $a,
a_2,\dots,a_l$, where
\begin{equation}\label{3.6}
{\cal A}= \{a,
a_2,\dots,a_l\}=\{i\,|\,a\le i<b,\
a\preceq i\}.
\end{equation}

For instance, if $t=8$, $(a,b)=(3,7)$,
and ${\cal A}=\{3,5,6\}$, then we
obtain
$$
A_{m+1}=\begin{bmatrix}
  0&0&0&0&0&0&0&I\\
  0&0&0&0&0&I&0&0\\
  0&0&0&0&I&0&0&0\\
  0&0&I&0&0&0&0&0\\
  0&0&0&0&0&0&I&0\\
  0&0&0&I&0&0&0&0\\
  0&I&0&0&0&0&0&0\\
  I&0&0&0&0&0&0&0
\end{bmatrix}.
$$

Each addition of a column of strip $i$
to a column of strip $j$, $i<j$, spoils
$A_{m+1}$, but it is restored by
transformations (iii) for all $(i,j)$
except when $(i,j)\in {\cal A}\times
{\cal B}$, where ${\cal
B}=\{a,a+1,\dots,b\} \smallsetminus
{\cal A}$. Hence, the obtained block
matrix $A^{(m+1)}$ simulates the poset
$(T, \sqsubseteq)$ with
$$
G(\sqsubseteq)=G(\unlhd)\smallsetminus
{\cal A}\times {\cal B}.
$$
By \eqref{3.6}, $i\npreceq j$ for all
$(i,j)\in {\cal A}\times{\cal B}$ with
$i<j$. Therefore, the relation
$\sqsubseteq$ satisfies \eqref{3.5}.

We construct $A_2,A_3,\dots$ until
obtain a block matrix
$A^{(r)}=[A_l]_{l=1}^r$ that simulates
the poset ${\cal P}=(T,\preceq)$.
\end{proof}

\section{Spatial matrices and tensors}
\label{s4}

The problem of classifying {\it tensors
of type} $(p,q)$, where $p$ and $q$ are
nonnegative integers, is the problem of
classifying $m\times\dots\times m$
families $[a_{i_1\dots i_{p+q}}]_{
i_1,\dots,i_{p+q}=1}^m$ of elements of
the field up to transformations
\begin{gather*}\label{3.6b}
[a_{i_1\dots i_{p+q}}]_{
i_1,\dots,i_{p+q}=1}^m \longmapsto
[b_{j_1\dots j_{p+q}}]_{
j_1,\dots,j_{p+q}=1}^m,\\ \label{3.6c}
 b_{j_1\dots
j_{p+q}}=\sum_{i_1,\dots,i_{p+q}=1}^m
a_{i_1\dots i_{p+q}}c_{i_1j_1}\dots
c_{i_pj_p}d_{i_{p+1}j_{p+1}}\dots
d_{i_{p+q}j_{p+q}},
\end{gather*}
where $[c_{ij}]=C$ is an arbitrary
nonsingular $m\times m$ matrix and
$[d_{ij}]=C^{\vee}=(C^T)^{-1}$. The
problem of classifying tensors of type
$(2,0)$ or $(1,1)$ is the problem of
classifying bilinear forms or linear
operators. A finite-dimensional algebra
is given by a tensor of type $(2,1)$.

In this section, we study the problem
of classifying three-valent tensors
($p+q=3$). For every $p\in\{0,1,2,3\}$,
we prove that the problem of
classifying tensors of type $(p,3-p)$
contains the problem of classifying
pairs of matrices up to simultaneous
similarity, but is not contained in it.

We start with an investigation of
spatial matrices up to equivalence
since each tensor of type $(p,3-p)$ is
an $m\times m\times m$ spatial matrix
${\mathbb A}$, and admissible
transformations with it are equivalence
transformations \eqref{1.00} given by
matrices \eqref{1.01} of the form
\begin{equation}\label{3.6d}
(R,S,T)=(\underbrace{C, \dots,
C}_{\mbox{$p$-times}}\
,C^{\vee},\dots,C^{\vee}).
\end{equation}

\begin{lemma}\label{l1.1}
For every $m\times n\times q$, the
following three classification problems
are equivalent:
\begin{itemize}
  \item[(i)] The problem of
classifying $m\times n\times q$ spatial
matrices up to equivalence.
  \item[(ii)] The problem of
classifying $q$-tuples of $m\times n$
matrices ${\cal A}=(A_1,\dots,A_q)$ up
to
\begin{itemize}
  \item[(a)]
simultaneous elementary transformations
with $A_1,\dots,A_q$, and
  \item[(b)]
the replacement of $\cal A$ with
\begin{equation}\label{1.2a}
(A_1,\dots,A_q)T=(A_1t_{11}+\dots+A_qt_{q1},
\dots,A_1t_{1q}+\dots+A_qt_{qq}),
\end{equation}
where $T=[t_{ij}]$ is a nonsingular
$q\times q$ matrix.
\end{itemize}
 \item[(iii)] The problem of classifying
spaces of $m\times n$ matrices of
dimension at most $q$ up to
multiplication by a nonsingular matrix
from the left and by a nonsingular
matrix from the right.
\end{itemize}
\end{lemma}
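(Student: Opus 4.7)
The plan is to view an $m\times n\times q$ spatial matrix $\mathbb{A}=[a_{ijk}]$ as the $q$-tuple $(A_1,\dots,A_q)$ of horizontal slices $A_k:=[a_{ijk}]_{i,j}\in k^{m\times n}$ and to read off the equivalence action \eqref{1.00} in these terms. First, fixing $k'$ and summing over $k$ in \eqref{1.00}, I would compute
\begin{equation*}
A'_{k'} \;=\; \sum_{k=1}^q t_{kk'}\, R^{T} A_k S,
\end{equation*}
so that the equivalence by matrices \eqref{1.01} factors as the simultaneous left--right multiplication $A_k\mapsto R^{T} A_k S$ (applied to every $k$) followed by the tuple-level transformation \eqref{1.2a}. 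Since $R,S$ range over all nonsingular matrices of the appropriate sizes (equivalently so do $R^T,S$), and since simultaneous elementary row and column transformations of $(A_1,\dots,A_q)$ generate precisely the group of such left--right multiplications, this already establishes (i)$\Leftrightarrow$(ii).

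For (ii)$\Leftrightarrow$(iii), I would associate to each $q$-tuple $\mathcal{A}=(A_1,\dots,A_q)$ the subspace $V(\mathcal{A}):=\mathrm{span}(A_1,\dots,A_q)\subseteq k^{m\times n}$, which has dimension at most $q$. Transformation (a) sends $V(\mathcal{A})$ to $R^{T}V(\mathcal{A})S$, while transformation (b) leaves $V(\mathcal{A})$ unchanged because an invertible $T$ preserves the span. Hence the orbit of $\mathcal{A}$ under (a)+(b) is determined by the orbit of $V(\mathcal{A})$ under left--right multiplication by nonsingular matrices, which is exactly problem (iii).

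The hard part will be the converse: given two $q$-tuples $(A_j)$ and $(B_j)$ spanning the same subspace $V$, I must produce an invertible $T=[t_{kj}]$ with $B_j=\sum_k A_k t_{kj}$. My plan is to interpret the tuples as linear maps $\phi,\psi\colon k^q\to V$ given by $e_j\mapsto A_j$ and $e_j\mapsto B_j$; both are surjective with kernels of the common dimension $q-\dim V$. I would then choose complements $U_\phi,U_\psi\subseteq k^q$ to $\ker\phi$ and $\ker\psi$, and define $T$ as the direct sum of the isomorphism $U_\psi\to U_\phi$ implementing $(\phi|_{U_\phi})^{-1}\psi|_{U_\psi}$ together with any chosen isomorphism $\ker\psi\to\ker\phi$. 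Such a $T$ is invertible and satisfies $\phi\circ T=\psi$, i.e.\ $B_j=\sum_k A_k t_{kj}$, as required. This small linear-algebra point (handling possibly linearly dependent spanning tuples) is the only subtlety; once settled, the three equivalences assemble immediately.
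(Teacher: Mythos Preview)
Your proposal is correct and follows the same slice-decomposition approach as the paper: represent $\mathbb{A}$ by its $q$ horizontal slices and read off the equivalence action. The paper's own proof is a two-sentence sketch that only states the forward directions of (i)$\Rightarrow$(ii)$\Rightarrow$(iii); you supply the converse for (ii)$\Leftrightarrow$(iii) (matching two $q$-tuples with the same span via an invertible $T$), which the paper leaves implicit, so your argument is a strict elaboration of theirs rather than a different route.
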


\begin{proof}
An $m\times n\times q$ spatial matrix
${\mathbb A}
=[a_{ijk}]_{i=1}^m{}_{j=1}^n{}_{k=1}^q$
may be given by the $q$-tuple $m\times
n$ matrices
\begin{equation}\label{3.6a}
{\cal A}=(A_1,\dots,A_q), \quad
A_k=[a_{ijk}]_{ij}.
\end{equation}
If ${\mathbb A}$ is determined up to
equivalence, then ${\cal A}$ is
determined up to transformations
(a)--(b); furthermore, the vector space
of $m\times n$ matrices generated by
$A_1,\dots,A_q$ is determined up to
simultaneous multiplications of its
matrices by a nonsingular $m\times m$
matrix from the left and a nonsingular
$n\times n$ matrix from the right.
\end{proof}

Of course, the matrix $T$ from
\eqref{1.2a} is a product of elementary
matrices. Hence, every transformation
(b) is a sequence of elementary
transformations: the transposition of
$A_i$ and $A_j$, the multiplication of
$A_i$ by a nonzero scalar, and the
replacement of $A_i$ by $A_i+bA_j$,
$i\ne j$.

\subsection{Classification of $m\times
n\times 2$ spatial matrices}
\label{s4.1}

For every natural number $r$, we define
two $(r-1)\times r$ matrices
$$
F_r=\begin{bmatrix}
 1&&&0 \\
 &\ddots&&\vdots\\
 &&1&0
 \end{bmatrix},\quad
G_r= \begin{bmatrix}
 0&1&& \\
 \vdots&&\ddots&\\
 0&&&1
 \end{bmatrix}.
$$

\begin{theorem} \label{t4.1}
Over an algebraically closed field,
every pair of $m\times n$ matrices
reduces by transformations (a)--(b) to
a direct sum of the form
\begin{equation}\label{4.1}
\bigoplus_i(F_{r_i},G_{r_i})\oplus
\bigoplus_j (F_{s_j}^T,G_{s_j}^T)\oplus
\bigoplus_{k=1}^q
(I_{l_k},J_{l_k}(\lambda_{k})).
\end{equation}
This sum  is determined uniquely, up to
permutation of summands and up to
linear-fractional transformations of
the sequence of eigenvalues:
\begin{equation}\label{4.1a}
(\lambda_1,\dots,\lambda_q) \longmapsto
\left(\frac{a+b\lambda_1}{c+d\lambda_1},\dots,
\frac{a+b\lambda_q}{c+d\lambda_q}\right),
\end{equation}
where $c+d\lambda_1\ne 0,
\dots,c+d\lambda_q\ne 0$, and $ad-bc\ne
0$.
\end{theorem}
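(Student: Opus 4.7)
The plan is to recognize the problem as the classical Kronecker matrix-pencil classification enhanced by a $\operatorname{GL}_2$-action on the spectral parameter. I would encode a pair $(A_1,A_2)$ as the pencil $L(\lambda):=A_1+\lambda A_2$ in a formal indeterminate~$\lambda$. Transformations (a) then act as strict equivalence $L(\lambda)\mapsto PL(\lambda)Q$ with constant invertible $P,Q$, while a transformation (b) by $T=[t_{ij}]\in\operatorname{GL}_2$ sends $L(\lambda)$ to $(t_{11}A_1+t_{21}A_2)+\lambda(t_{12}A_1+t_{22}A_2)=(t_{11}+\lambda t_{12})\,L\!\bigl(\tfrac{t_{21}+\lambda t_{22}}{t_{11}+\lambda t_{12}}\bigr)$; that is, it reparametrizes $\lambda$ by an arbitrary Möbius transformation and multiplies by a nonzero rational scalar which does not affect the strict-equivalence class of the pencil.

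Given this reduction, the first step is to apply Kronecker's theorem to $L(\lambda)$: over an algebraically closed field, every pencil is strictly equivalent to a direct sum of rectangular blocks $F_{r_i}+\lambda G_{r_i}$, their transposes $F_{s_j}^T+\lambda G_{s_j}^T$, finite-eigenvalue Jordan blocks $I_l+\lambda J_l(\mu)$, and possibly infinite-eigenvalue blocks $J_l(0)+\lambda I_l$. In pair notation this already yields the three summand types of \eqref{4.1} together with possible ``infinite'' pairs $(J_l(0),I_l)$.

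The second step is to remove any infinite-eigenvalue blocks using (b). The swap $T=\bigl(\begin{smallmatrix}0&1\\1&0\end{smallmatrix}\bigr)$ exchanges the two components, converting $(J_l(0),I_l)$ into $(I_l,J_l(0))$, which has the required shape; a further strict equivalence restores the rectangular summands to their canonical $(F_r,G_r)$ and $(F_s^T,G_s^T)$ forms. This produces the direct-sum decomposition \eqref{4.1}.

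For uniqueness I would combine two ingredients. First, Kronecker's theorem asserts that the minimal indices $\{r_i\}$, $\{s_j\}$ and the Jordan block sizes $\{l_k\}$ are invariants of strict equivalence, and these are clearly unaffected by the $\operatorname{GL}_2$-action, which only reparametrizes $\lambda$. Second, the computation in the opening paragraph shows that the induced action on the spectral parameter is exactly the Möbius action appearing in \eqref{4.1a}, so the eigenvalue sequence $(\lambda_1,\dots,\lambda_q)$ is determined up to that action. The main obstacle—mild but genuine—is to verify this action on the Jordan eigenvalues explicitly: starting from $(I_l,J_l(\mu))$ and applying any $T\in\operatorname{GL}_2$, one must check that the resulting pair is strictly equivalent to $(I_l,J_l(\mu'))$ with $\mu'$ the stated linear-fractional transform of $\mu$. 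This reduces to the functional-calculus identity that for a Möbius function $\phi$ defined at $\mu$, the matrix $\phi(J_l(\mu))$ is similar to $J_l(\phi(\mu))$, which is standard Jordan-form theory.
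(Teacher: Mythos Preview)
Your plan is essentially the paper's: reduce to Kronecker canonical form using (a), then analyze how the $\operatorname{GL}_2$-action (b) interacts with that form. The paper likewise argues that (b) preserves pencil-indecomposability (hence the rectangular summands and all block sizes) and that on the regular part it induces the M\"obius action \eqref{4.1a} via the functional-calculus computation you describe.

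There is, however, one genuine gap, in your removal of the infinite-eigenvalue blocks. The swap $T=\bigl(\begin{smallmatrix}0&1\\1&0\end{smallmatrix}\bigr)$ acts on the \emph{entire} pair, so while it turns each $(J_l(0),I_l)$ into $(I_l,J_l(0))$, it simultaneously turns every already-present finite block $(I_l,J_l(\mu))$ into $(J_l(\mu),I_l)$. For $\mu\ne 0$ this is repaired by strict equivalence, but if $\mu=0$ is among the finite eigenvalues you have merely exchanged one infinite block for another: the pair $([1],[0])\oplus([0],[1])$ is sent by your swap to $([0],[1])\oplus([1],[0])$, the same thing up to permutation of summands. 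In M\"obius language, the swap sends $\infty$ to $0$, which fails precisely when $0$ is already in the spectrum. The fix---and this is exactly what the paper does---is to use instead the shear $(P,Q)\mapsto(P+dQ,\,Q)$ with $d\ne 0$ chosen so that $1+d\lambda_k\ne 0$ for every finite eigenvalue $\lambda_k$; then the first matrix of every regular summand becomes nonsingular simultaneously, and all of them can be brought to the form $(I_l,J_l(\cdot))$ by transformations (a). Such a $d$ exists because an algebraically closed field is infinite. With this correction, your argument and the paper's coincide.
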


\begin{proof}
Let ${\cal A}=(A_1,A_2)$ be a pair of
$m\times n$ matrices. Using
transformations (a) from Lemma
\ref{l1.1}, we reduce it to
\begin{equation}\label{4.2}
\bigoplus_i(F_{r_i},G_{r_i})\oplus
\bigoplus_j (F_{s_j}^T,G_{s_j}^T)\oplus
\bigoplus_{k=1}^{q_1}
(I_{l_k},J_{l_k}(\lambda_{k}))\oplus
\bigoplus_{k=q_1+1}^{q}
(J_{l_k}(0),I_{l_k})
\end{equation}
(the classification of pencils of
matrices, see \cite{gan}). This sum is
determined uniquely up to permutation
of summands.

We will say that a pair of matrices is
{\it pencil-decomposable} if it reduces
by transformations (a) to a direct sum
of pairs. Each transformation (b) with
$$
T=\begin{bmatrix}
  c&a\\d&b
\end{bmatrix},\quad ad-bc\ne 0
$$
replaces each summand $(P,Q)$ of
\eqref{4.2} with
\begin{equation}\label{4.3}
(P',Q')=(cP+dQ,aP+bQ).
\end{equation}
This pair is pencil-indecomposable
(otherwise, $T^{-1}$ transforms its
direct decomposition to the direct
decomposition of $(P,Q)$, but each
summand of \eqref{4.2} is
pencil-indecomposable). All
indecomposable pairs of $(r-1)\times r$
matrices reduce to $(F_r,G_r)$ by
transformations (a). Hence, if $(P,Q)=
(F_r,G_r)$, then $(P',Q')$ reduces to
$(F_r,G_r)$ too. This proves that every
transformation (b) with the pair ${\cal
A}=(A_1,A_2)$ does not change the
summand $\bigoplus_i(F_{r_i},G_{r_i})$
in the decomposition \eqref{4.2}. The
same holds for the summand $\bigoplus_j
(F_{s_j}^T, G_{s_j}^T)$ too.

If $q_1<q$, then we reduce the pair
\eqref{4.2} to the pair \eqref{4.1}
(with other $\lambda_1,\dots,
\lambda_{q_1}$) as follows. We convert
all summands
$(I_{l_k},J_{l_k}(\lambda_{k}))$ and
$(J_{l_k}(0),I_{l_k})$ to
pencil-indecomposable pairs with
nonsingular first matrices by
transformation \eqref{4.3} with
$c=b=1,\ a=0$, and a nonzero $d$  such
that $d\lambda_1\ne -1,\dots,
d\lambda_{q_1}\ne -1$. Then we reduce
these summands to the form
$(I,J(\lambda))$ by transformations
(a).

Each transformation \eqref{4.3}
converts all summands
$(I_{l_k},J_{l_k}(\lambda_k))$ of
\eqref{4.1} to the pairs of matrices
$(cI_{l_k}+dJ_{l_k}(\lambda_k),
aI_{l_k}+bJ_{l_k}(\lambda_k)),$ which
are simultaneously equivalent to
\begin{equation}\label{4.4}
(I_{l_k},\
(aI_{l_k}+bJ_{l_k}(\lambda_k))\cdot
(cI_{l_k}+dJ_{l_k}(\lambda_k))^{-1}).
\end{equation}
The matrices
$aI_{l_k}+bJ_{l_k}(\lambda_k)$ and
$cI_{l_k}+dJ_{l_k}(\lambda_k)$ are
triangular; their diagonal entries are
$a+b\lambda_k$ and $c+d\lambda_k$.
Hence, the pair of matrices  \eqref{4.4}
is simultaneously equivalent to
$$
\left(I_{l_k},\ J_{l_k}
\left(\frac{a+b\lambda_k}{c+d\lambda_k}
\right)\right),
$$
this gives the transformation
\eqref{4.1a}.
\end{proof}

\subsection{Wildness of tensors and
$m\times n\times 3$ spatial matrices}
\label{s4.2}

\begin{theorem} \label{t4.2}
The problem of classifying $m\times
n\times 3$ spatial matrices up to
equivalence is wild.
\end{theorem}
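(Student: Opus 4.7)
By Lemma \ref{l1.1}(ii), classifying $m\times n\times 3$ spatial matrices up to equivalence is the same as classifying triples $(A_1,A_2,A_3)$ of $m\times n$ matrices modulo (a) the simultaneous left-right action $A_k\mapsto RA_kS$, and (b) right-multiplication of the row $(A_1,A_2,A_3)$ by a nonsingular $3\times 3$ matrix $T$. To establish wildness, I plan to exhibit a containment of the pair-simultaneous-similarity problem in this triple problem, in the sense defined in the introduction. That is, for each pair $(X,Y)$ of $n\times n$ matrices I will construct an explicit triple $\mathcal T(X,Y)=(A_1,A_2,A_3)$ with entries among $0$, scalars, or entries of $X,Y$, arranged as blocks inside a rigid skeleton of identity and zero blocks, and then prove that $(X,Y)\sim(X',Y')$ (simultaneous similarity) iff $\mathcal T(X,Y)\sim\mathcal T(X',Y')$ under (a)--(b).

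\textbf{Candidate construction and the easy direction.} A natural first candidate, with $m=3n$, $n'=2n$, is
$$A_1=\begin{pmatrix} I_n & 0\\ 0 & 0\\ 0 & 0 \end{pmatrix},\qquad A_2=\begin{pmatrix} 0 & I_n\\ I_n & 0\\ 0 & I_n \end{pmatrix},\qquad A_3=\begin{pmatrix} 0 & 0\\ X & 0\\ Y & 0 \end{pmatrix},$$
possibly to be rearranged to tighten the rigidity argument below. The easy direction of the equivalence is immediate: if $X'=S_1^{-1}XS_1$ and $Y'=S_1^{-1}YS_1$, then $R=\operatorname{diag}(S_1^{-1},S_1^{-1},S_1^{-1})$, $S=\operatorname{diag}(S_1,S_1)$, and $T=I_3$ carry $\mathcal T(X,Y)$ to $\mathcal T(X',Y')$ by (a),(b).

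\textbf{Hard direction (main obstacle).} The heart of the argument is showing that conversely any admissible $(R,S,T)$ sending $\mathcal T(X,Y)$ to $\mathcal T(X',Y')$ must have $T=I_3$ (up to an overall nonzero scalar), and must have $R$ and $S$ block-diagonal with every diagonal block equal to a common nonsingular $S_1^{-1}$ (resp.\ $S_1$); the induced relation on $A_3$ then reads off as a simultaneous similarity $X'=S_1^{-1}XS_1$, $Y'=S_1^{-1}YS_1$. The plan has two substeps. First, use that the three skeletons of $A_1,A_2,A_3$ have pairwise distinct rank/support patterns (up to left-right action), so any nontrivial linear combination of them has a support pattern different from each of $A_1,A_2,A_3$; since $T$ acts by such linear combinations on the row $(A_1,A_2,A_3)$, this forces $T$ to be diagonal (and then identity after rescaling). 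Second, with $T=I_3$ fixed, the equations $RA_kS=A_k'$ for $k=1,2$ involve blocks consisting only of $I_n$ and $0$, and equating them block-by-block forces $R=\operatorname{diag}(S_1^{-1},S_1^{-1},S_1^{-1})$ and $S=\operatorname{diag}(S_1,S_1)$ for a single nonsingular $S_1$. Substituting into $RA_3S=A_3'$ then yields the desired simultaneous similarity.

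\textbf{Principal difficulty.} The main obstacle is calibrating the skeleton of identity blocks so that both substeps above succeed simultaneously: the skeleton must be \emph{rigid} enough to kill every $T\neq\operatorname{scalar}\cdot I_3$ and every non-block-diagonal $R,S$, yet \emph{compatible} with the simultaneous-similarity action one wishes to recover. If the candidate above admits stray $T$-actions -- e.g.\ a transposition of two of the $A_k$ could be realized by left-right action -- one enlarges $m$ and $n'$ and introduces additional asymmetric identity blocks playing the role of ``markers'', in the spirit of the construction $(M,N)$ used in Theorem \ref{t2.1}. The verification of the hard direction then reduces to a finite block-matrix calculation of the kind carried out there.
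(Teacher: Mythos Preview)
Your overall plan matches the paper's, but the specific candidate has a genuine gap in the first substep. The simplest failure is not a transposition but a diagonal rescaling: with $T=\operatorname{diag}(1,1,c)$, $c\ne 1$, and $R=I_{3n}$, $S=I_{2n}$, the triple $\mathcal T(X,Y)=(A_1,A_2,A_3)$ is carried to $(A_1,A_2,cA_3)=\mathcal T(cX,cY)$, yet $(X,Y)$ and $(cX,cY)$ are in general not simultaneously similar (take $X=I_n$). So the ``only if'' direction already fails. Your argument yields at best a \emph{diagonal} $T$, and the parenthetical ``and then identity after rescaling'' hides two free ratios among $t_{11},t_{22},t_{33}$ that your skeleton does nothing to pin down. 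The markers you propose, in the spirit of Theorem~\ref{t2.1}, separate row and column strips but do not couple the three layers, so they cannot kill this diagonal freedom either. (A smaller leak: your rank argument for forcing $T$ diagonal also breaks down when both $X$ and $Y$ happen to be scalar multiples of $I_n$.)

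The paper's construction addresses precisely this by adjoining two extra direct summands to the block carrying $X,Y$. A summand $(B_1,B_2,B_3)$ with the $B_i$ supported in pairwise disjoint diagonal blocks of carefully chosen sizes enforces strict rank inequalities
\[
\rank(A_1+\alpha A_2+\beta A_3)>\rank A_1>\rank(A_2+\gamma A_3)>\rank A_2>\rank A_3
\]
for all $(\alpha,\beta)\ne(0,0)$ and all $\gamma\ne 0$, which forces $T$ diagonal regardless of $X,Y$. The decisive second summand is $(I_r,I_r,I_r)$: once $T=\operatorname{diag}(t_{11},t_{22},t_{33})$, the Krull--Schmidt theorem (Theorem~\ref{t1.1}), applied to the two triples viewed as representations of the three-arrow Kronecker quiver, matches this summand with itself and gives $(t_{11}I_r,t_{22}I_r,t_{33}I_r)\sim(I_r,I_r,I_r)$, hence $t_{11}=t_{22}=t_{33}$. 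Only after $T$ is scalar does the analogue of your second substep go through. The key idea missing from your proposal is a device that ties the three diagonal entries of $T$ together.
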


\begin{proof}
For every pair $(X,Y)$ of $r\times r$
matrices, we construct the triple of
matrices:
$$
(A_1,\,A_2,\,A_3(X,Y)) = (B_1,\, B_2,\,
B_3) \oplus (I_r,I_r,I_r) \oplus
(C_1,\,C_2,\, C_3(X,Y)),
$$
where
$$
(B_1,\,B_2,\,B_3)= \left(
\begin{bmatrix}
I_{6r}&&\\&0&\\&&0
\end{bmatrix},\
\begin{bmatrix}
0&&\\&I_{2r}&\\&&0
\end{bmatrix},\
\begin{bmatrix}
0&&\\&0&\\&&I_{2r}
\end{bmatrix}\right)
$$
and
$$
(C_1,\,C_2,\,C_3(X,Y)) = \left(
I_{4r},\
\begin{bmatrix}
0&&&\\I_r&0&&\\0&I_r&0&\\0&0&I_r&0
\end{bmatrix},\ \begin{bmatrix}
0&&&\\0&0&&\\X&0&0&\\0&Y&0&0
\end{bmatrix}\right).
$$
We will prove that
$(A_1,\,A_2,\,A_3(X,Y))$ reduces to
$(A_1,\,A_2,\,A_3(X',Y'))$ by
transformations (a)--(b) from Lemma
\ref{l1.1} if and only if the pairs of
matrices $(X,Y)$ and $(X',Y')$ are
simultaneously similar.

We write $(M_1,M_2,M_3)\sim
(N_1,N_2,N_3)$ if these triples of
matrices are simultaneously equivalent.

Suppose that $(A_1,\,A_2,\,A_3(X,Y))$
reduces to $(A_1,\,A_2,\,A_3(X',Y'))$
by transformations (a)--(b). Then there
exists a nonsingular $3\times 3$ matrix
$T=[t_{ij}]$ such that
$(A_1,\,A_2,\,A_3(X,Y))T\sim
(A_1,\,A_2,\,A_3(X',Y'))$ (see
\eqref{1.2a}). Hence,
$$
\rank{(A_1t_{1j}+A_2t_{2j}+A_3(X,Y)t_{3j})}=
\begin{cases}
\rank{A_j} &\text{if $j=1$ or $j=2$,}\\
\rank{A_3(X',Y')} &\text{if $j=3$.}
\end{cases}
$$
This implies $t_{ij}=0$ if $i\ne j$
since
\begin{multline*}
\rank{(A_1+A_2\alpha+A_3(X,Y)\beta)}>
\rank{A_1} \\
>\rank{(A_2+A_3(X,Y)\gamma)}
> \rank{A_2}> \rank{A_3(X',Y')}
\end{multline*}
for all $\alpha,\beta,\gamma$ such that
$(\alpha,\beta)\ne (0,0)$ and
$\gamma\ne 0$.

Therefore,
\begin{multline*}
(A_1t_{11},\,A_2t_{22},\,A_3(X,Y)t_{33})
=(B_1t_{11},\, B_2t_{22},\,
B_3t_{33})\\ \oplus
(I_rt_{11},I_rt_{22},I_rt_{33}) \oplus
(C_1t_{11},\,C_2t_{22},\,
C_3(X,Y)t_{33})
\end{multline*}
is simultaneously equivalent to
$$
(A_1,\,A_2,\,A_3(X',Y')) = (B_1,\,
B_2,\, B_3)\oplus (I_r,I_r,I_r) \oplus
(C_1,\,C_2,\, C_3(X',Y')).
$$
They can be considered as isomorphic
representations of the quiver
$
1\underrightarrow{\\[-0.5mm]
\overrightarrow{\rightarrow}}2.
$
By Theorem \ref{t1.1},
\begin{gather}
(I_rt_{11},I_rt_{22},I_rt_{33})\sim
(I_r,I_r,I_r),\label{4.5} \\
(C_1t_{11},\,C_2t_{22},\,
C_3(X,Y)t_{33})\sim(C_1,\,C_2,\,
C_3(X',Y')) \label{4.6}
\end{gather}
since $(B_1t_{11},\, B_2t_{22},\,
B_3t_{33})\sim (B_1,\, B_2,\, B_3)$,
the triples \eqref{4.5} are direct sums
of triples of $1\times 1$ matrices, and
each of the triples \eqref{4.6} cannot
be simultaneously equivalent to a
direct sum containing a triple of
$1\times 1$ matrices. By \eqref{4.5},
$t_{11}=t_{22}=t_{33}$. Then
$(C_1t_{11},\,C_2t_{22},\,
C_3(X,Y)t_{33})\sim (C_1,\,C_2,\,
C_3(X,Y))$, and by \eqref{4.6}
$(C_1,\,C_2,\, C_3(X,Y))\sim
(C_1,\,C_2,\, C_3(X',Y'))$. Since
$C_1=I$, $(C_2,\, C_3(X,Y))$ is
simultaneously similar to $(C_2,\,
C_3(X',Y'))$.

Therefore, there is a nonsingular
matrix $R$ such that
\begin{equation*}\label{4.7}
C_2R=RC_2,\quad C_3(X,Y)R=RC_3(X',Y').
\end{equation*}
By the first equality,
$$R=\begin{bmatrix}
  R_1&&&\\R_2&R_1&&\\R_3&R_2&R_1&\\
  R_4&R_3&R_2&R_1
\end{bmatrix}.
$$
By the second equality, $XR_1=R_1X$ and
$YR_1=R_1Y$.
\end{proof}

In the remaining part of Section
\ref{s4.2}, we prove the following
theorem.

\begin{theorem}\label{t4.2a}
For each $p\in\{0,1,2,3\}$, the problem
of classifying tensors of type
$(p,3-p)$ is wild since it contains the
problem of classifying spatial matrices
up to equivalence.
\end{theorem}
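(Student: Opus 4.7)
My plan is to give, for each $p\in\{0,1,2,3\}$, an explicit polynomial map $\mathbb A\longmapsto{\mathbb T}_p(\mathbb A)$ from spatial matrices to type-$(p,3-p)$ tensors that is a containment of matrix problems in the sense of Section~\ref{s1}; combined with Theorem~\ref{t4.2}, this immediately yields wildness.

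Given an $m_1\times m_2\times m_3$ spatial matrix $\mathbb A$, I take $V=V_0\oplus V_1\oplus V_2\oplus V_3$ with $\dim V_i=m_i$ for $i=1,2,3$ and $V_0$ an auxiliary ``control'' block, and define ${\mathbb T}_p(\mathbb A)\in V^{\otimes 3}$ (with index types appropriate to the pattern $(p,3-p)$) by placing $\mathbb A$ in the $(V_1,V_2,V_3)$-block and filling the remaining blocks with a fixed \emph{rigidifier} pattern that does not depend on~$\mathbb A$. The rigidifier has to be designed so that the stabilizer of ${\mathbb T}_p(\mathbb A)$ under the tensor action $C\mapsto C^{\otimes p}\otimes(C^\vee)^{\otimes 3-p}$ is exactly the block-diagonal subgroup $C=C_0\oplus C_1\oplus C_2\oplus C_3$ with $C_1,C_2,C_3$ arbitrary nonsingular matrices of the appropriate sizes.

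Granted such a rigidifier, a block-diagonal $C$ acts on the $(V_1,V_2,V_3)$-block as $X_1\otimes X_2\otimes X_3$, where each $X_i\in\{C_i,\,C_i^\vee\}$ according to whether the $i$-th slot of the tensor is covariant or contravariant. Since $C_i\mapsto C_i^\vee=(C_i^T)^{-1}$ is a bijection on $\mathrm{GL}_{m_i}$, the triples $(X_1,X_2,X_3)$ range over all triples of nonsingular matrices of sizes $m_1,m_2,m_3$. Hence tensor equivalence of ${\mathbb T}_p(\mathbb A)$ and ${\mathbb T}_p(\mathbb A')$ coincides with spatial equivalence \eqref{1.00} of $\mathbb A$ and $\mathbb A'$, giving the desired containment.

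The main obstacle is the explicit design of the rigidifier for each $p$. The marker matrix of \eqref{2.5} cannot simply be reused, because in the tensor setting the single transformation $C$ acts simultaneously on all three slots (with possibly twisted duality), so the ``diagonal scalar'' argument from Theorem~\ref{t2.1} must be adapted to trilinear data. A workable strategy is to use distinct scalar weights $l$ on the diagonal blocks $(V_l,V_l,V_l)$ to separate the summands $V_l$ and pin down $C|_{V_l}$ inside the stabilizer of the chosen weight, and then to use ``gluing'' tensors in cross-blocks involving $V_0$ to kill the off-diagonal entries of $C$. Carrying out this construction case-by-case for each of the four values of~$p$ and verifying that the resulting stabilizer is exactly the claimed block-diagonal group is the bulk of the technical work; once done, Theorem~\ref{t4.2} supplies wildness.
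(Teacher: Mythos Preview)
Your plan is plausible in spirit but leaves the hard part undone, and the paper shows that this hard part can be bypassed entirely.

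The paper's construction uses \emph{no} rigidifier: for an $m\times n\times q$ spatial matrix $\mathbb A$ it simply places $\mathbb A$ in the off-diagonal block $(1,2,3)$ of an $(m{+}n{+}q)^3$ spatial block matrix $\mathbb H(\mathbb A)$ and puts zeros everywhere else (see \eqref{4.8}). The forward direction is exactly your observation: a block-diagonal $C=Q_1'\oplus Q_2'\oplus Q_3'$ (with $Q_i'$ equal to $Q_i$ or $Q_i^\vee$ according to $p$) realizes any spatial equivalence $(Q_1,Q_2,Q_3)$ on the $(1,2,3)$-block. The backward direction, however, does not argue that the stabilizer is block-diagonal; instead it uses Lemma~\ref{l4.1} on regular parts. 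Tensor equivalence of $\mathbb H(\mathbb A)$ and $\mathbb H(\mathbb B)$ is in particular spatial equivalence, and a regular part of $\mathbb H(\mathbb A)$ is visibly also a regular part of $\mathbb A$; Lemma~\ref{l4.1} then gives $\mathbb A\sim\mathbb B$ directly. So the rank-triple invariant \eqref{4.6cc} replaces the rigidifier you are trying to build.

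Concerning your proposed rigidifier itself: the ``distinct scalar weights on the diagonal blocks $(V_l,V_l,V_l)$'' idea, imported from the matrix $M$ in \eqref{2.5}, does not carry over cleanly. For $p\in\{0,3\}$ the action $C^{\otimes 3}$ (or $(C^\vee)^{\otimes 3}$) is homogeneous of degree~$3$ in $C$, so any global scalar in $C$ rescales every block by the same cube and cannot separate them; and for mixed $p$ the degree is still constant across blocks. You would therefore need genuinely non-scalar markers on the diagonal blocks and additional cross-block gluing through $V_0$, and then a nontrivial case analysis to pin down $C$. That program may be completable, but it is exactly the ``bulk of the technical work'' you defer, whereas the paper's route via Lemma~\ref{l4.1} is short, uniform in $p$, and requires no auxiliary block at all.
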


An $m\times n\times q$ spatial matrix
${\mathbb A} =[a_{ijk}]$ may be given
by any of the following sequences of
matrices:
\begin{gather}
{\cal A}^{(1)}=(A_1^{(1)},\dots,
A_m^{(1)}), \quad
A_i^{(1)}=[a_{ijk}]_{jk},\label{4.6a}\\
 {\cal
A}^{(2)}=(A_1^{(2)},\dots, A_n^{(2)}),
\quad
A_j^{(2)}=[a_{ijk}]_{ik},\label{4.6b}\\
 {\cal
A}^{(3)}=(A_1^{(3)},\dots, A_q^{(3)}),
\quad
A_k^{(3)}=[a_{ijk}]_{ij};\label{4.6c}
\end{gather}
the last sequence coincides with
\eqref{3.6a}. They play the same role
in the theory of spatial matrices as
the sequences of rows and columns in
the theory of matrices. If ${\mathbb
A}$ is determined up to equivalence, we
may produce arbitrary elementary
transformations within each of the
sequences \eqref{4.6a}--\eqref{4.6c} by
analogy with transformations (b) from
Lemma \ref{l1.1} for \eqref{3.6a}.
Moreover, two spatial matrices are
equivalent if and only if one reduces
to the other by elementary
transformations within
\eqref{4.6a}--\eqref{4.6c}.

It follows that the triple
\begin{equation}\label{4.6cc}
\rank{\mathbb A}=(r_1,r_2,r_3),\quad
r_i=\rank{\cal A}^{(i)},
\end{equation}
is invariant with respect to
equivalence transformations with
${\mathbb A}$ ($r_i$ is the rank of the
system of matrices ${\cal A}^{(i)}$ in
the vector space of matrices of the
corresponding size). We will say that
${\mathbb A}$ is {\it regular} if
$\rank{\mathbb A}=(m,n,q)$, where
$m\times n\times q$ is the size of
${\mathbb A}$.

Let us make the first $r_1$ matrices in
${\cal A}^{(1)}$ linearly independent
and the others zero by elementary
transformations with ${\mathbb A}$.
Then we reduce the ``new'' ${\cal
A}^{(2)}$ and ${\cal A}^{(3)}$ in the
same way. The obtained spatial matrix
is equivalent to $\mathbb A$ and has
the form ${\mathbb A}'\oplus {\mathbb
O}$, where ${\mathbb A}'$ is a regular
$r_1\times r_2\times r_3$ spatial
matrix and ${\mathbb O}$ is the zero
$(m-r_1)\times (n-r_2) \times (q-r_3)$
spatial matrix. We will call ${\mathbb
A}'$ a {\it regular part} of $\mathbb
A$.

\begin{lemma}\label{l4.1}
Two spatial matrices of the same size
are equivalent if and only if their
regular parts are equivalent.
\end{lemma}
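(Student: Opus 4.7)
The plan is to establish both implications of the ``iff''; the reverse direction is immediate, while the forward direction is the substance of the lemma.

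For the reverse direction, suppose $\mathbb{A} \sim \mathbb{A}'\oplus \mathbb{O}$ and $\mathbb{B} \sim \mathbb{B}'\oplus \mathbb{O}$ with $\mathbb{A}$ and $\mathbb{B}$ of the same size $m\times n\times q$. Since the rank triple \eqref{4.6cc} is an equivalence invariant, the two regular parts (and hence the two zero spatial matrices) have the same sizes. Direct sum with a common zero preserves equivalence, and transitivity then yields $\mathbb{A} \sim \mathbb{B}$ from $\mathbb{A}'\sim\mathbb{B}'$.

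For the forward direction, rank invariance again guarantees that the regular parts share a common size $r_1\times r_2\times r_3$, so transitivity reduces the claim to the following: if $\mathbb{A}'\oplus\mathbb{O} \sim \mathbb{B}'\oplus\mathbb{O}$ for regular spatial matrices $\mathbb{A}', \mathbb{B}'$ of the same size, then $\mathbb{A}'\sim\mathbb{B}'$. Choose invertible $R, S, T$ of sizes $m, n, q$ implementing the equivalence via \eqref{1.00}. The core step is to show that, with respect to the block decomposition in which the first blocks have sizes $r_1, r_2, r_3$, the matrices $R, S, T$ are block triangular with vanishing top-right blocks. For this, fix $i' > r_1$ and examine the $i'$-th first-direction slice of $\mathbb{B}'\oplus\mathbb{O}$: it is zero, and by \eqref{1.00} it equals $S^T \bigl(\sum_{i\le r_1} r_{ii'} A_i\bigr) T$, where $A_1,\dots,A_{r_1}$ are the nonzero first-direction slices of $\mathbb{A}'\oplus\mathbb{O}$. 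Since $S, T$ are invertible and these slices are linearly independent by regularity of $\mathbb{A}'$ in the first direction, we deduce $r_{ii'} = 0$ for all $i \le r_1 < i'$. The analogous arguments using second- and third-direction slices impose the same vanishing on $S$ and $T$.

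Writing $R, S, T$ in the corresponding block-triangular form forces the top-left blocks $R_{11}, S_{11}, T_{11}$ to be invertible. Restricting \eqref{1.00} to indices $i'\le r_1$, $j'\le r_2$, $k'\le r_3$, the off-diagonal blocks of $R, S, T$ contract against the zero portion of $\mathbb{A}'\oplus\mathbb{O}$ and drop out, leaving exactly the equivalence transformation of $\mathbb{A}'$ by $(R_{11}, S_{11}, T_{11})$ yielding $\mathbb{B}'$. The only nonroutine step is the linear-independence bookkeeping that forces the block-triangular shape; once that is in hand, the restriction to the regular part is a direct substitution.
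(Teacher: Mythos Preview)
Your proof is correct and follows essentially the same route as the paper: both directions are handled the same way, and for the forward implication you exploit regularity (linear independence of the slices in each direction) to force $R,S,T$ into block-triangular form, whence the diagonal blocks $R_{11},S_{11},T_{11}$ are invertible and give the desired equivalence of the regular parts. The only cosmetic difference is that the paper phrases the argument using the third-direction slice presentation \eqref{3.6a}, while you run it symmetrically through all three directions; the underlying idea is identical.
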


\begin{proof}
Let $\mathbb A$ and $\mathbb B$ be
$m\times n\times q$ spatial matrices.
Without loss of generality, we will
assume that
\begin{equation}\label{4.6d}
{\mathbb A}={\mathbb A}'\oplus {\mathbb
O}, \quad {\mathbb B}={\mathbb
B}'\oplus {\mathbb O},
\end{equation}
where ${\mathbb A}'$ and ${\mathbb B}'$
are their regular parts.

{\it Necessity.} Suppose that $\mathbb
A$ and $\mathbb B$ are equivalent, and
their equivalence is given by matrices
$R,S$, and $T$ (see \eqref{1.01}). Then
${\mathbb A}'$ and ${\mathbb B}'$ have
the same size $r_1\times r_2\times
r_3$, where $(r_1, r_2,
r_3)=\rank{\mathbb A}$. Following
\eqref{3.6a}, we will give $\mathbb A$
and $\mathbb B$ by the sequences ${\cal
A}=(A_1,\dots,A_q)$ and ${\cal
B}=(B_1,\dots,B_q)$. Put
\begin{equation}\label{4.6e}
(C_1,\dots, C_q)=(A_1,\dots, A_q)T,
\end{equation}
then
\begin{equation}\label{4.6f}
(R^TC_1S,\dots, R^TC_qS)=
(B_1,\dots,B_q)
\end{equation}
by analogy with transformations
(a)--(b) from Lemma \ref{l1.1}.

Let us partition $R,S$, and $T$ into
blocks $R=[R_{ij}]_{i,j=1}^2$,
$S=[S_{ij}]_{i,j=1}^2$, and
$T=[T_{ij}]_{i,j=1}^2$ in accordance
with the decompositions \eqref{4.6d},
where $R_{11},S_{11}$, and $T_{11}$
have sizes $r_1\times r_1$, $r_2\times
r_2$, and $r_3\times r_3$. If
$T_{21}\ne 0$, then $C_l\ne 0$ for a
certain $l>r_3$ since $A_1,\dots,
A_{r_3}$ are linearly independent (see
\eqref{4.6e} and \eqref{4.6d}). By
\eqref{4.6f}, $B_l\ne 0$; a
contradiction.

Hence, $T_{21}=0$; analogously
$R_{21}=0$ and $S_{21}=0$. It follows
that $R_{11},S_{11}$, and $T_{11}$ are
nonsingular and produce an equivalence
of ${\mathbb A}'$ to ${\mathbb B}'$.

{\it Sufficiency.} Suppose that
${\mathbb A}'$ and ${\mathbb B}'$ are
equivalent and their equivalence is
given by matrices $R,S$, and $T$. Then
the matrices $R\oplus I_{m-r_1},\,
S\oplus I_{n-r_2}$, and $T\oplus
I_{q-r_3}$ produce an equivalence of
${\mathbb A}$ to ${\mathbb B}$.
\end{proof}

\begin{proof}[Proof of Theorem \ref{t4.2a}]
For an $m\times n\times q$ spatial
matrix $\mathbb A$, we construct the
spatial block matrix
\begin{equation}\label{4.8}
{\mathbb H}({\mathbb A})= [{\mathbb
H}_{ijk}]_{i,j,k=1}^3,\quad {\mathbb
H}_{ijk}=
  \begin{cases}
   {\mathbb A} & \text{if $(i,j,k)=(1,2,3)$}, \\
   {\mathbb O} & \text{otherwise},
  \end{cases}
\end{equation}
where the diagonal blocks ${\mathbb
H}_{111},\,{\mathbb H}_{222},$ and
${\mathbb H}_{333}$ have sizes $m\times
m\times m,\, n\times n\times n,$ and
$q\times q\times q.$

Let $\mathbb B$ be another $m\times
n\times q$ spatial matrix. Then
$\mathbb A$ is equivalent to $\mathbb
B$ if and only if ${\mathbb H}({\mathbb
A})$ and ${\mathbb H}({\mathbb B})$
determine the same tensor of type $(p,
3-p)$. Indeed, if matrices
$Q_1,Q_2,Q_3$ give an equivalence of
$\mathbb A$ to $\mathbb B$, then
${\mathbb H}({\mathbb A})$ reduces to
${\mathbb H}({\mathbb B})$ by
equivalence transformations
\eqref{1.00} satisfying \eqref{3.6d},
where $C$ is
$$
Q_1^{\vee}\oplus Q_2^{\vee}\oplus
Q_3^{\vee},\
 Q_1\oplus Q_2^{\vee}\oplus
Q_3^{\vee},\
 Q_1\oplus Q_2\oplus Q_3^{\vee},\
 \text{ or }\
 Q_1\oplus Q_2\oplus Q_3
$$
if, respectively, $p$ is $0,1,2$, or 3.
Conversely, if ${\mathbb H}({\mathbb
A})$ is reduced to ${\mathbb
H}({\mathbb B})$ by transformations
\eqref{1.00}, then they are equivalent.
Since their regular parts are regular
parts of ${\mathbb A}$ and ${\mathbb
B}$ too, ${\mathbb A}$ and ${\mathbb
B}$ are equivalent by Lemma \ref{l4.1}.

We have proved that the problem of
classifying tensors of type $(p,3-p)$
contains the problem of classifying
spatial matrices up to equivalence. By
Theorems \ref{t4.2}, the first problem
is wild.
\end{proof}

\subsection{Spatial matrices and tensors
are ``very wild''} \label{s4.3}

\begin{theorem} \label{t4.3}
The problem of classifying pairs of
matrices up to simultaneous similarity
does not contain both
\begin{itemize}
  \item[(i)] the problem of
classifying $m\times n\times 2$ spatial
matrices up to equivalence, and
  \item[(ii)] the problem of
classifying tensors of type $(p,3-p)$
for each $p\in\{0,1,2,3\}$.
\end{itemize}
\end{theorem}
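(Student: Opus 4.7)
The strategy is proof by contradiction, exploiting the linear-fractional action on eigenvalues from Theorem~\ref{t4.1}, which has no polynomial counterpart on the simultaneous-similarity side. Suppose $\mathcal T$ gives a containment of $m\times n\times 2$ spatial matrix equivalence in pair simultaneous similarity. Consider the family $\mathbb A_{\vec\lambda}=\bigoplus_{k=1}^{4}(I_{l_k},J_{l_k}(\lambda_k))$, whose entries are linear in $\lambda_1,\dots,\lambda_4$. By Theorem~\ref{t4.1}, four-tuples $\vec\lambda$ with different cross-ratios give inequivalent spatial matrices, so this realizes a continuous one-parameter family of equivalence classes. The Möbius scaling $\lambda\mapsto c\lambda$ (for $c\in k^*$) is implemented on $\mathbb A_{\vec\lambda}$ by the spatial equivalence with $T=\operatorname{diag}(1,c)\in\mathrm{GL}_2$ acting on the third axis, followed by a suitable diagonal simultaneous similarity on the first two axes (which moves $(I_{l_k},cJ_{l_k}(\lambda_k))$ back to $(I_{l_k},J_{l_k}(c\lambda_k))$); hence $\mathbb A_{c\vec\lambda}\sim\mathbb A_{\vec\lambda}$, and therefore $\mathcal T(\mathbb A_{c\vec\lambda})$ is simultaneously similar to $\mathcal T(\mathbb A_{\vec\lambda})$.

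Apply any polynomial simultaneous-similarity invariant $I$ of pairs of matrices --- for instance the trace of a noncommutative monomial in $M$ and $N$ --- to obtain a polynomial $P(\vec\lambda):=I(\mathcal T(\mathbb A_{\vec\lambda}))$ satisfying $P(c\vec\lambda)=P(\vec\lambda)$ for all $c\in k^*$. Decomposing $P=\sum_d P_d$ by homogeneous degree and comparing powers of $c$ forces $P_d=0$ for $d\neq 0$, so $P$ is a constant. Consequently every polynomial simultaneous-similarity invariant of $\mathcal T(\mathbb A_{\vec\lambda})$ is independent of $\vec\lambda$. By Procesi's theorem on traces as a complete system of invariants, this means the pairs $\mathcal T(\mathbb A_{\vec\lambda})$ all share the same semisimplification, so for generic $\vec\lambda$ (where the image pairs are semisimple) they are all simultaneously similar to one another --- contradicting the fact that the $\mathbb A_{\vec\lambda}$ with different cross-ratios belong to distinct equivalence classes.

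For (ii), composing any hypothetical polynomial reduction of tensors of type $(p,3-p)$ with the embedding $\mathbb A\mapsto\mathbb H(\mathbb A)$ used in the proof of Theorem~\ref{t4.2a} produces a polynomial reduction of spatial matrix equivalence to pair simultaneous similarity, contradicting (i). The main obstacle --- and where the proof requires careful handling --- is the genericity step: one must verify that for suitable choices of the block sizes $l_k$, the image pairs $\mathcal T(\mathbb A_{\vec\lambda})$ are semisimple (have closed $\mathrm{GL}_N$-orbit) for generic $\vec\lambda$, or else argue directly via orbit-closure dimensions that a polynomial one-parameter family of pairwise non-similar pairs with identical polynomial invariants is impossible. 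The scaling-invariance mechanism itself is robust and works equally well for $\mathcal T$ with arbitrary noncommutative polynomial entries, not only the restricted ``scalars and $x_i$'' form used elsewhere in the paper.
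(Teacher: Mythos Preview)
Your argument for (i) has a genuine gap precisely where you flag ``the main obstacle.'' You correctly deduce that every trace invariant $P(\vec\lambda)=I(\mathcal{T}(\mathbb{A}_{\vec\lambda}))$ is a scaling-invariant polynomial in $\vec\lambda$, hence constant, so by Procesi all the pairs $\mathcal{T}(\mathbb{A}_{\vec\lambda})$ share a common semisimplification. But the passage from ``same semisimplification'' to ``same similarity class (generically)'' does not go through. Your first proposed fix --- that $\mathcal{T}(\mathbb{A}_{\vec\lambda})$ is semisimple for generic $\vec\lambda$ --- cannot be verified: $\mathcal{T}$ is an \emph{arbitrary} noncommutative-polynomial template, and nothing prevents its image from lying entirely in a non-semisimple locus. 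Your second proposed fix --- that ``a polynomial one-parameter family of pairwise non-similar pairs with identical polynomial invariants is impossible'' --- is simply false. The wildness of pairs of commuting nilpotent matrices (Gelfand--Ponomarev \cite{gel-pon}) already produces polynomial multi-parameter families of pairwise non-similar pairs whose trace invariants all vanish identically. A single fiber of the invariant-theory quotient can thus contain continuous families of orbits, so no contradiction is reached.

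The paper avoids invariant theory entirely and uses a far more elementary device: additivity of $\mathcal{T}$ under direct sums. Since the $1\times 1$ pairs $([1],[0])$ and $([1],[1])$ are equivalent via a transformation of type (b), their images $\mathcal{T}([1],[0])$ and $\mathcal{T}([1],[1])$ are simultaneously similar. Hence the diagonal $3\times 3$ pairs $A=(\operatorname{diag}(1,0,1),\operatorname{diag}(0,1,0))$ and $A'=(\operatorname{diag}(1,0,1),\operatorname{diag}(0,1,1))$, which differ only by replacing one $([1],[0])$ summand with $([1],[1])$, have simultaneously similar images $\mathcal{T}(A)$ and $\mathcal{T}(A')$. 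A two-line rank comparison then shows $A$ and $A'$ are \emph{not} equivalent as $3\times 3\times 2$ spatial matrices, giving the contradiction directly. Your treatment of (ii) via composition with $\mathbb{H}(\cdot)$ matches the paper's.
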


\begin{proof}
(i) To the contrary, suppose there
exists a pair ${\cal T}(x_1,x_2)$ of
matrices, whose entries are
noncommutative polynomials in
$x_1,x_2$, such that a pair
$A=(A_1,A_2)$ of $m\times n$ matrices
reduces to $A'=(A'_1,A'_2)$ by
transformations (a)--(b) from Lemma
\ref{l1.1} if and only if ${\cal T}(A)$
is simultaneously similar to ${\cal
T}(A')$.

Put
$$
A=\left(\begin{bmatrix}
  1&&\\&0&\\&&1
\end{bmatrix},\
\begin{bmatrix}
  0&&\\&1&\\&&0
\end{bmatrix}\right),\quad
A'=\left(\begin{bmatrix}
  1&&\\&0&\\&&1
\end{bmatrix},\
\begin{bmatrix}
  0&&\\&1&\\&&1
\end{bmatrix}\right).
$$

Since the pair of $1\times 1$ matrices
$([1],[0])$ reduces to $([1],[1])$ by
transformations (b), the pair  ${\cal
T}([1],[0])$ is simultaneously similar
to ${\cal T}([1],[1])$. Therefore,
${\cal T}(A)={\cal T}([1],[0])\oplus
{\cal T}([0],[1])\oplus {\cal
T}([1],[0])$ is simultaneously similar
to ${\cal T}(A')={\cal
T}([1],[0])\oplus {\cal
T}([0],[1])\oplus {\cal T}([1],[1])$,
and hence $A$ reduces to $A'$ by
transformations (a)--(b). By definition
of transformations (a)--(b), there
exist $\alpha,\beta,\gamma,\delta$ such
that $\alpha\delta-\beta\gamma\ne 0$
and
$$
A''=\left(\begin{bmatrix}
  \alpha&&\\&\beta&\\&&\alpha
\end{bmatrix},\
\begin{bmatrix}
  \gamma&&\\&\delta&\\&&\gamma
\end{bmatrix}\right)
$$
is simultaneously equivalent to $A'$.
Equating the ranks of matrices in $A''$
and $A'$ gives $\beta=\delta=0$,
contrary to
$\alpha\delta-\beta\gamma\ne 0$.

(ii) Suppose the problem of classifying
pairs of matrices up to simultaneous
similarity contains the problem of
classifying tensors of type $(p,3-p)$.
Then, by Theorem \ref{t4.2a}, the first
problem contains the problem of
classifying spatial matrices up to
equivalence, contrary to (i).
\end{proof}

{\it The authors wish to thank
Professors Yuri\u{\i} Drozd and Leiba
Rodman for stimulating discussions.}

\end{document}